\newtheorem{theorem}{Theorem}[section]
\newtheorem{lemma}[theorem]{Lemma}
\newtheorem{example}[theorem]{Example}
\newtheorem{remark}[theorem]{Remark}
\def\qedsymbol{$\blacksquare$}
\def\eqn#1{\label{eq:#1}}
\def\eq#1{(\ref{eq:#1})} \def\Eq#1{Eq.~\eq{#1}} \def\Eqs#1{Eqs.~\eq{#1}}
\def\fig#1{\ref{fig:#1}} \def\Fig#1{Fig.~\fig{#1}} \def\Figs#1{Figs.~\fig{#1}}
\def\sup{\qopname\relax o{sup}}
\def\supp{\qopname\relax o{supp}}
\def\co{\qopname\relax o{co}}
\def\rank{\qopname\relax o{rank}}
\def\ep{\qopname\relax o{ep}}
\def\D{\qopname\relax o{D}}
\def\bound{\partial}
\def\-{\overline}
\def\c#1{{#1\llap{$\overline{\phantom{\mathrm{#1}}}$}}}
\def\cco{{\overline\co}}
\def\mc{\mathcal}
\def\mcq{\;,\quad}
\def\qh#1{\quad\hbox{#1}}
\def\qhq#1{\quad\hbox{#1}\quad}
\def\qqhqq#1{\qquad\hbox{#1}\qquad}
\def\lsup{\vee} \def\linf{\wedge}
\def\subsup{\triangledown} \def\subinf{\vartriangle}
\def\s{\sigma} \def\m{\mu} \def\n{\nu} \def\e{\epsilon} \def\j{\xi} \def\x{\chi}
\def\l{\lambda} \def\r{\rho}
\def\W{\Omega} \def\S{\Sigma}
\def\oo{\infty}
\def\Re{{\rm Re}}
\def\R{\mathrm{I\hskip-.5ex R}}
\def\N{\mathrm{I\hskip-.5ex N}}
\begin{document}

\title{The NMF problem and lattice-subspaces}

 \author{Ioannis A. Polyrakis \\
\\Department of Mathematics\\ National Technical University of Athens\\
ypoly@math.ntua.gr }
\maketitle

\begin{abstract} Suppose that $A$ is a nonnegative $n\times m$ real matrix.
The NMF problem is the determination of two nonnegative real matrices $F$,
$V$ so that $A=FV$ with  intermediate dimension $p$ smaller than $min\{
n,m\}$.
In this article  we present
 a general  mathematical method for the determination of two nonnegative real factors $F,V$ of $A$. During the  first steps of this process the intermediate dimension $p$ of $F,V$ is  determined, therefore we have an easy criterion for $p$.
   This
 study is based on the theory of lattice-subspaces and
positive bases. Also we give
the matlab program  for the computation of  $F,V$ but the mathematical part is the main part of this article.

\end{abstract}

{\bf Key words}:  Dimension reduction, nonnegative matrix
factorization, vector lattices, sublattices, lattice-subspaces, positive bases.

{\bf Mathematical Subject Classification} 15A23, 46A40.

\section{The NMF problem and positive bases}
Suppose that $A$
is a  nonnegative  $n\times m$   real matrix.
In  applications the
 nonnegative matrix factorization (NMF) problem   is  the following: Find  two  nonnegative real matrices $F$ and $V$ of  $n\times p$ and $p\times m$,  with $p\leq \min\{n,m\}$  such that
$A=FV$  or  the matrix $C=FV$ is an "approximation" of $A$ and  we say that the pair of factors $F,V$  is  an exact  NMF  or an  NMF approximation of $A$.
The NMF problem has  many applications in  data analysis  problems  such as in chemical concentrations,  document clustering, image processing, e.t.c. Since the exact NMF problem is not  solvable in general, the NMF problem  is commonly approximated numerically, see in ~\cite{Ber} and  ~\cite{Kim} for an introduction in numerical approximation methods.
Recall that the factorization    $A=FV$  implies  \begin{equation}\label{fact}a_i=\sum_{j=1}^pf_{ij}v_j,\;\text{and}\;a^i=\sum_{j=1}^p v_{ji}f^j,\end{equation} where for any  real matrix $B=(b_{ij})$,  denote by  $b_i$ the $i$-row  and by $b^i$ the $i$-column of $B$.
So each  row (column) of $A$ is a linear combination of the rows of $V$ (columns of $F$) and the coefficients of the $i$-row ($i$-column) of $A$ in this expansion  are the elements of the $i$-row of $F$ ($i$-column of $V$).
This implies  that \begin{equation}\label{fact0}\rank(A)\leq \min\{\rank(F),\rank(V)\}\leq p.\end{equation} The minimum   intermediate dimension  of all the factorizations  of $A$ is referred  in ~\cite{Cohen}  as  {\it the nonnegative rank}  of $A$ and it is denoted by $\rank_+(A)$. This implies that  $$rank(A)\leq rank_+(A)\leq \min\{n,m\}.$$
If  the intermediate dimension $p$ of the factors  $F,V$ is equal  to the rank of $A$, we say that $F,V$ is a {\it nonnegative rank factorization} (NRF) of $A$, see in ~\cite{Camp}. The NRF problem does not have always a solution because $\rank_+(A)$ may be strictly greater than $\rank(A)$ but the   determination of $\rank_+(A)$  of $A$ is an interesting problem of matrix factorization. The factorization $A=FV$ is a {\it symmetric}  NRF  of $A$ if it is a nonnegative rang factorization of $A$ with  $V=F^T$, i.e. $A=FF^T$, where $F^T$ is  the transpose of $F$.\\
In this article we give a general mathematical method based on the theory of lattice-subspaces and positive bases expanded in ~\cite{POLY96} and ~\cite{POLY99} which determines an   exact  factorization of $A$ in the nonnegative factors $F,V$ without any restriction for $A$.
During the first steps of our algorithmic process and before the determination of the factors $F,V$  we can determine the intermediate dimension $p$ of  $F,V$, and therefore  we can  know if $F,V$ will be  an NRF,  an NMF,  a trivial, or an other kind factorization of $A$.\\
 For the determination of the factors $F,V$ of $A$   we
determine  a positive basis of a minimal  lattice-subspace $Z$ of
$\mathbb{R}^m$ which contains  a maximal set of linearly independent rows  of $A$. The elements of the positive basis fixe some of the
columns of $A$ and also indicate some real numbers so that   $F$ is the matrix generated by positive multiples of these columns of $A$ by the corresponding  real numbers and $V$ is the matrix with rows the vectors of the positive basis of $Z$.
Of course an exact NMF factorization of $A$ is not always possible because $p<\min\{n,m\}$ requires    $\rank(A)< \min\{n,m\}$.
In  the case where $\rank(A)=2$ we determine  by a simple and very easy way an exact rank  factorization of $A$,  see in Subsection~\ref{rank}.  This result is not new.  The case of $\rank(A)=2$   has been studied    in ~\cite{Cohen} and ~\cite{Amb} where  an algorithmic process for an exact NRF of $A$ is proposed but our way  is very simple and a part of our general method of matrix factorization.\\
In  ~\cite{Kal}, an exact, symmetric nonnegative rank factorization of $A$, i.e. $A=WW^T$, is determined in the case where $A$ is a  symmetric $n\times n$ nonnegative real matrix  which contains a diagonal principal submatrix of the same rank with $A$.  In  the more general case where $A$ is a $n\times m$ nonnegative real matrix which contains a diagonal principal submatrix of the same rank with $A$, we show that by our  factorization method we take also a nonnegative rank factorization of $A$, Subsection~\ref{diag}, but if $A$ is symmetric this  factorization is not necessarily symmetric as in  ~\cite{Kal}.

In this article, in order to simplify  computations,  we suppose that  $A$ does not have zero columns. Indeed, if before the determination of the factors  $F,V$ we have deleted the zero columns of $A$ then if we put  zero columns in $V$   in the place of the deleted columns of $A$, the product $FV$ is the initial matrix $A$.
In our matlab program we delete the zero columns of $A$ by the function "Zero" and after the determination of $F,V$   we add  zero columns in $V$, if it is needed,  by the function "addzeros".

So in this article  we suppose that $A=(a_{ij})$ is a  nonnegative $n\times m
$  real matrix without zero columns.  For any $i$,
$a_i=(a_{i1},a_{i2},...,a_{im})$ is the $i$-row of $A$ and we will also denote by $a_i(j)$ the $j$-coordinate of $a_i$, i.e. $a_i(j)=a_{ij}$.
We start by a maximal set
 $$\{y_{1},y_{2},...,y_{r}\}$$      of
 linearly independent rows of $A$  which we will  refer  as    {\bf basic set}  (of the rows of $A$) and
   we  will denote by $X$ the subspace of $\mathbf{R}^m$ generated by these vectors, i.e. $$X=[y_{1},y_{2},...,y_{r}].$$ Then  $a_i\in X$ for any $i$.
 In the sequel we determine a positive basis $\{b_1,b_2,...,b_d\}$ of a minimal lattice-subspace
 $Z$ of $\mathbf{R}^m$ which contains the vectors  $\{y_{1},y_{2},...,y_{r}\}$  and  we state Theorem~\ref{1} which gives
 a method for the determination of $F,V$. {\it The fact that  $\{b_1,b_2,...,b_d\}$ is a positive basis of a minimal lattice-subspace $Z$ which contains the basic set $\{y_{1},y_{2},...,y_{r}\}$  is crucial because it ensures that  any $y_i$ and also any other row of $A$ has nonnegative coordinates in this basis.}

 For the determination of a  positive basis
$\{b_1,b_2,...,b_d\}$ of $Z$  we follow the steps of ~\cite{POLY96}, Theorem 3.7 and ~\cite{POLY99}, Theorem 3.10 (Theorem ~\ref{Prop4} and Theorem ~\ref{Prop5} in the Appendix).
 We describe below the process for the determination of a positive basis of $Z$ because  it is needed  in the proof of Theorem~\ref{1}, in the examples  and also in the whole article.

 In the first step we start by a fixed basic set $\{y_{1},y_{2},...,y_{r}\}$ and we determine  the  {\bf basic function} $\beta :\{1,2,...,m\}\longrightarrow \mathbb{R}^r_+$ of the vectors $y_i$. This  function has been defined in   ~\cite{POLY96} and is the following\footnote{In ~\cite{POLY96} the basic function is referred as basic curve}:
$$ \beta(i)=\Bigl(\frac{y_1(i)}{y(i)},\frac{y_2(i)}{y(i)},...,
\frac{y_r(i)}{y(i)}\Bigr),\;\text{for each}\;\;
i=1,2,...,m\;\text{with\;} y(i)>0,$$  where $y=y_1+y_2+...+y_r$,
is  the sum of the  vectors $y_i$. In our case we have $y(i)>0$ for any $i$ because we have deleted the zero columns of $A$. \footnote{Note that  $y(i)=||(y_1(i),y_2(i),...,y_r(i))||_1$, where $||.||_1$ is the $\ell_1$-norm of $\mathbf{R}^r$ and this notation  is used in ~\cite{POLY96}.}
The  function $\beta$ takes values in the simplex
$\Delta=\{x\in\mathbb{R}^r_+\;|\;\sum_{i=1}^rx(i)\;=1\} $ of the
positive cone  of $\mathbb{R}^r$. The set
 $$R(\beta)= \{\beta(i)\;|\;i=1,2,...,m\},$$ is the range of $ \beta$. Of  course   $R(\beta)$, as a set,  is consisting by mutually different vectors and also
 $R(\beta)$  contains exactly $r$ linearly independent vectors,   see Lemma~\ref{l1} in the Appendix.
Suppose that $\mu$ is  the cardinal number  of $R(\beta)$, i.e. $\mu$ is  the number of the different values of the basic function $\beta$. Then $$r\leq \mu \leq m.$$
 In the sequel, according to Theorem~\ref{Prop5} in the Appendix  we consider  the convex polytope $K$ of $\Delta$ generated by the vectors of $R(\beta)$ and suppose that $P_1,P_2,...,P_d $ are the vertices of $K$. Then   $\beta(i)\in K$ for any $i$ and any  $\beta(i)$ is a convex combination of the vertices $P_i$. This  shows    that the set of   vertices of $K$ contains a maximal set of linearly independent vectors of $R(\beta)$ and  according to Lemma~\ref{l1} this set  has exactly $r$ elements. So we have that $$r\leq d\leq \mu\leq m.$$
According to Theorem~\ref{Prop5},
 we reenumerate the vertices $P_i$  of $K$ so that the first $r$ of theme to be  linearly independent and we denote again by \begin{equation}\label{eq1} P_1,P_2,...,P_d \end{equation}  the new enumeration of the vertices  where the first $r$ of theme  are linearly independent.
In  Theorem~\ref{1}, we show that the intermediate dimension $p$ of the factors $F,V$ is equal to the number $d$ of vertices of $K$. If
 $d\geq \min\{n,m\}$  our matlab programm sent the sign {\it the intermediate dimension is equal to M1} and also $M1$ is appeared (in  matlab program $M1=d$). So we have the possibility to continue or not.

If $r=d$, then according to Theorem~\ref{Prop4}, $X$ is a lattice-subspace and therefore $X$ is the minimal lattice subspace which contains the vectors $y_i$, i.e.  $Z=X$ and a positive basis $\{b_1,b_2,...,b_d\}$ of $Z$ is given by the formula $$(b_1,b_2,...,b_d)^T=L^{-1}(y_1,y_2,...,y_d)^T,$$
where $L$ is the matrix with columns the vectors of $P_1,P_2,...,P_d$ and in the sequel, the factors $F,V$ are   determined by Theorem~\ref{1}.
 In the  case where  $\mu=r$, i.e. if $R(\beta)$ has exactly $r$ elements,  the vertices of $K$ are the vectors  of $R(\beta)$ therefore
   we have again that $d=r$, the basis $\{b_1,b_2,...,b_d\}$ of $Z$ is given by the above formula so the factors $F,V$  by Theorem~\ref{1}.
 So we determine $F,V$ by avoiding the   computations of the  vertices  of $K$. Note also that if $r=\mu$,
$X$ is a sublattice of $\mathbb{R}^m$ although  this information is not important for our method. In both of these cases, we have $\rank(A)=d$ therefore $A=FV$ is a rank factorization of $A$ and by our matlab program we determine this factorization with the sign
 {\it Rank factorization, the rows of the matrix generate a lattice-subspace(sublattice)}.\\

 If $r<d\leq m$, according to Theorem ~\ref{Prop5} in the Appendix we define  $d-r$ new vectors $y_{r+1},...,y_d$ of $\mathbb{R}^m$ following the next steps:\\
First, for any $i=1,2,...m$, we expand  $\beta(i)$ as a convex combination  of the vertices $P_1,P_2,...,P_d$ and suppose that   \begin{equation}\label{eq0} \beta(i)=\sum_{j=1}^d\xi_j(i)P_j.\end{equation}  Such an expansion of $\beta(i)$ is not necessarily unique but we select one of theme.  Of course $\xi_j(i)\geq 0$ for any $j$  and $\sum_{j=1}^d\xi_j(i)=1$.
In the sequel for any $k=r+1,...,d$, we define the  new vector $y_{k}$  of $\mathbb{R}^m$ as follows: $$y_{k}(i)=\xi_k(i)y(i)\;\text{for any}\;i=1,2,...,m,$$ where $y$ is the sum of the vectors $y_1,...,y_r$.
According to Theorem~\ref{Prop5} in the Appendix, the   subspace $$Z=[y_1,...,y_r,y_{r+1},...,y_d],$$ generated by these vectors is a $d$-dimensional  minimal lattice-subspace of $\mathbb{R}^m$ which contains the vectors $y_1,...,y_r$.
  So a positive basis of $Z$ is determined  by Theorem~\ref{Prop4} in the Appendix, because the subspace $Z$  generated by these vectors is a lattice-subspace. Therefore for the determination of a positive basis of $Z$, according to Theorem~\ref{Prop4},   we take the  basic function of the vectors $y_1,...,y_r,y_{r+1},...y_d$ which we denote by $\gamma$, i.e. the function
\begin{equation}\label{eq2} \gamma(i)=\Bigl(\frac{y_1(i)}{y'(i)},\frac{y_2(i)}{y'(i)},...,
\frac{y_d(i)}{y'(i)}\Bigr),\;\text{for each}\;\;
i=1,2,...,m,\end{equation}  where $y'=y_1+y_2+...+y_d$,
is  the sum of the  vectors $y_i,\; i=1,2,...d$. Then  by Theorem~\ref{Prop4}, the convex hull of the values $\gamma(i)$, $i=1,2,...,m$ of $\gamma$ is a convex polytope with  vertices the linearly independent vectors $R_1,R_2,...,R_d$ of $\mathbb{R}^m$ and a positive basis $\{b_1,b_2,...,b_d\}$ of $Z$ is given by the formula $$(b_1,b_2,...,b_d)^T=L^{-1}(y_1,y_2,...,y_d)^T,$$
where $L$ is the matrix with columns the vectors $R_1,R_2,...,R_d$. Of course any  $R_k$ is the image $\gamma(i_k)$ of an  index $i_k$ because the convex polytope is generated by a finite number of vectors. According to Theorem~\ref{Prop5}, the above vectors $R_i$ can be determined  by the vectors $P_1,P_2,...,P_d $ of (\ref{eq1})  where the first $r$ of theme are  linearly independent as follows: $R_i=(P_i,0)$, for any $i=1,2,...,r$ and $R_{r+k}=(P_{r+k},e_k)$, for any $k=1,2,...,d-r$ where
the second component of  the vectors $(P_i,0)$, is the  the zero vector of $\mathbf{R}^{d-r}$ and the second component $e_k$ of $(P_{n+k},e_k)$ is the $k$-vector of the usual basis $\{e_1,...e_{d-r}\}$  of $\mathbf{R}^{d-r}$. This way of the determination of  $R_i$ simplifies the  computations because  avoids the determination of the vertices of the convex polytope generated by  $R(\gamma)$ and we adopt  this  way in our the matlab program.\\
Note that if $d=m$, Theorem~\ref{1} gives   a trivial factorization of $A$.
This can occur if $r=m$ or if $r<d=m$. In both cases   the matlab program is interrupted with the sign {\it  trivial factorization}.

We recall now the definition of the positive basis with nodes.
Suppose that   $Y$ is a  lattice-subspace  of $\mathbb{R}^m$ with a positive basis
  $\{b_1,b_2,...,b_\nu \}$.
If for some $k\in \{1,2,...,\nu\}$ there exists an   index $i_k$ so that
 $b_k(i_k)>0$ and $b_j(i_k)=0$ for any $j\neq k$, we say that $i_k$ is a $k$-node of the basis $\{b_1,b_2,...,b_\nu\}$.
If a set $\{i_1,i_2,...,i_\nu\}$ of indices exists so that for any $k=1,2,...,\nu$, the index $i_k$ is a $k$-node of the basis, we say that  $\{i_1,i_2,...,i_\nu\}$ is a  {\it set of  nodes}  of the basis $\{b_1,b_2,...,b_\nu\}$ and also that $\{b_1,b_2,...,b_\nu\}$ is a  {\it basis  with nodes}. Then  for any $k=1,2,...,\nu$ we have $b_k(i_k)>0$ and $b_j(i_k)=0$  for any $j=1,2,...,\nu$ with $j\neq k$.

Before to state our factorization theorem  we note  that  according to  ~\cite{POLY99}, Example 3.21, a minimal lattice-subspace $Z$ of  $\mathbb{R}^m$ which contains the basic set  $\{y_1,...,y_r\}$  is not necessarily unique. This seems also by  Example~\ref{ex1} of this article where two different minimal lattice-subspaces are determined. The reason  is due to the fact that in (\ref{eq0}) the convex combination of the values $\beta(i)$  and therefore also and   the new  vectors $y_{r+1},...,y_d$ are  not necessarily uniquely determined.

For the sake of completeness and for the importance of the next result we give its proof. The proof  can be also followed   by ~\cite{POLY96} in  the  special case where  $\Omega=\{1,2,...,m\}$ but not directly. In the next result we keep  the above terminology.

\begin{theorem}\label{0} If $Z$ is a minimal  lattice-subspace  of $\mathbb{R}^m$  which contains  the linearly independent  vectors   $y_1,y_2,...,y_r$ of $\mathbb{R}^m_+$ and $\{b_1,b_2,...,b_d\}$ is a positive basis of $Z$, then $\{b_1,b_2,...,b_d\}$ is a basis with nodes.
\end{theorem}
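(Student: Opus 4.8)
The plan is to prove the statement directly from the definition of a positive basis together with elementary convex geometry, without re‑running the construction of Section~1 (in fact only the facts that $Z$ carries the order induced from $\mathbb{R}^m$ and that $\{b_1,\dots,b_d\}$ is a positive basis, with $d=\dim Z$, will be used). Recall this means $Z_+:=Z\cap\mathbb{R}^m_+=\{\sum_{k=1}^d\lambda_kb_k:\lambda_k\ge 0\}$; hence the coordinate map $T:Z\to\mathbb{R}^d$, $T(\sum_k\lambda_kb_k)=(\lambda_1,\dots,\lambda_d)$, is a linear isomorphism with $T(Z_+)=\mathbb{R}^d_+$. For each $i\in\{1,\dots,m\}$ put $v_i:=(b_1(i),\dots,b_d(i))$; since every $b_k\in\mathbb{R}^m_+$ we have $v_i\in\mathbb{R}^d_+$, and $x(i)=\langle v_i,Tx\rangle$ for all $x\in Z$, i.e. the $i$‑th coordinate functional of $\mathbb{R}^m$, restricted to $Z$, becomes $\langle v_i,\cdot\rangle$ on $\mathbb{R}^d$ under $T$.

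The heart of the argument is the claim $\mathrm{cone}(v_1,\dots,v_m)=\mathbb{R}^d_+$. The inclusion $\subseteq$ is immediate. For $\supseteq$ I would argue by contradiction: if some $w\in\mathbb{R}^d_+$ lay outside the closed (finitely generated) cone $C:=\mathrm{cone}(v_1,\dots,v_m)$, separation of $w$ from $C$ yields $u\in\mathbb{R}^d$ with $\langle u,w\rangle<0$ and $\langle u,v_i\rangle\ge 0$ for all $i$. Then $z:=T^{-1}(u)=\sum_ku_kb_k$ satisfies $z(i)=\langle u,v_i\rangle\ge 0$ for every $i$, so $z\in Z\cap\mathbb{R}^m_+=Z_+$; expanding $z$ in the positive basis and matching coefficients forces $u=Tz\in\mathbb{R}^d_+$, whence $\langle u,w\rangle\ge 0$, a contradiction. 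This is the step in which the lattice‑subspace / positive‑basis hypothesis is actually consumed.

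To conclude, I would invoke the extreme‑ray structure. The orthant $\mathbb{R}^d_+$ has exactly the $d$ extreme rays $\mathbb{R}_+e_1,\dots,\mathbb{R}_+e_d$, where $\{e_k\}$ is the usual basis of $\mathbb{R}^d$; and since $\mathbb{R}^d_+=\mathrm{cone}(v_1,\dots,v_m)$ is generated by finitely many vectors, each of its extreme rays is spanned by one of the $v_i$. Hence for every $k\in\{1,\dots,d\}$ there is an index $i_k$ with $\mathbb{R}_+v_{i_k}=\mathbb{R}_+e_k$, i.e. $v_{i_k}=c_ke_k$ for some $c_k>0$. Unwinding the definition of $v_{i_k}$, this says precisely that $b_k(i_k)=c_k>0$ and $b_j(i_k)=0$ for all $j\ne k$, so $i_k$ is a $k$‑node; moreover the indices $i_1,\dots,i_d$ are distinct (the $v_{i_k}$ span distinct rays), so $\{i_1,\dots,i_d\}$ is a set of nodes for $\{b_1,\dots,b_d\}$, which is the assertion.

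I expect the only genuine obstacle to be the convex‑geometry input of the middle paragraph, namely the separation argument and the fact that the extreme rays of a finitely generated cone are among its generators; everything else is unwinding of definitions. An alternative route, closer to the explicit construction recalled above, is to first note that a positive basis of a lattice‑subspace is unique up to a permutation and positive scalings, reduce to the basis $(b_1,\dots,b_d)^T=L^{-1}(y_1,\dots,y_d)^T$ of Theorems~\ref{Prop4} and~\ref{Prop5}, observe that $b_k(i)=y'(i)\lambda_k(i)$ with $\lambda_k(i)$ the barycentric coordinates of $\gamma(i)$ in the vertices $R_1,\dots,R_d$, and use that each vertex $R_k$ equals some $\gamma(i_k)$, at which $\lambda_k(i_k)=1$, $\lambda_j(i_k)=0$ for $j\ne k$ and $y'(i_k)>0$; I would nonetheless prefer the first, self‑contained route.
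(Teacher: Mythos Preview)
Your proof is correct and takes a genuinely different route from the paper's. The paper reduces to the explicit positive basis produced by Theorems~\ref{Prop4} and~\ref{Prop5} (invoking uniqueness of positive bases up to permutation and positive scaling), and then reads off the nodes as the indices $i_k$ at which the basic function $\gamma$ (or $\beta$) hits the vertices $R_k$ (or $P_k$): from $L(b_1,\dots,b_d)^T=(y_1,\dots,y_d)^T$ and $R_k=\gamma(i_k)$ one gets $b_k(i_k)=y'(i_k)>0$ and $b_j(i_k)=0$ for $j\ne k$ by linear independence of the $R_j$. Your argument, by contrast, never looks at the construction: you show directly that $\mathrm{cone}(v_1,\dots,v_m)=\mathbb{R}^d_+$ via a separation argument that exploits exactly the equality $Z\cap\mathbb{R}^m_+=\{\sum_k\lambda_kb_k:\lambda_k\ge0\}$, and then harvest the nodes from the extreme-ray structure of the orthant. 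What your route buys is a cleaner and strictly stronger statement---minimality of $Z$ and the presence of the $y_i$ are never used, so every positive basis of every lattice-subspace of $\mathbb{R}^m$ has nodes---and it sidesteps the (slightly delicate) point of why an \emph{arbitrary} minimal lattice-subspace containing the $y_i$ must arise from the process of Theorem~\ref{Prop5}. What the paper's route buys is constructiveness: it tells you exactly where the nodes live (at the vertex-preimages of the basic function), which is precisely what the factorization algorithm needs. Your closing paragraph already sketches this alternative, and it is essentially the paper's own proof.
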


\begin{proof} The vectors of a  positive basis of $Z$  are  unique in the sense of a positive multiple and a permutation. So we may suppose that $\{b_1,b_2,...,b_d\}$ is a positive basis of $Z$ which arises by the above process where of course $d\geq r$. As we have seen above, $d$ is the number of the vertices of the convex polytope $K$ generated by  $R(\beta)$.

First we consider the case $d>r$. Then according to
 Theorem ~\ref{Prop5}, the basis  is given by the formula
 \begin{equation}\label{e3}(b_1,b_2,...,b_d)^T=L^{-1}(y_1,y_2,...,y_d)^T,
 \end{equation}
 where $L$ is the matrix with columns the vertices  $R_1,R_2,...,R_d$ of the convex polytope generated by the values $\gamma(i)$ of the basic function $\gamma$ of the vectors $y_1,y_2,...,y_d$.
So we have $$L(b_1,b_2,...,b_d)^T=(y_1,y_2,...,y_d)^T.$$  Then any $R_k$ is the image $\gamma(i_k)$ of an index $i_k$ and so  we take a set of indexes $\{i_1,i_2,...,i_d\}$, with  $R_k=\gamma(i_k)$ for any $k$.
 Therefore we have
$$(R_k)^T=(\gamma(i_k))^T=\frac{1}{y'(i_k)}(y_1(i_k),y_2(i_k),...,y_d(i_k))^T=\frac{1}{y'(i_k)}L(b_1(i_k),b_2(i_k),...,b_d(i_k))^T.$$
Since $L$ is the matrix with columns the vectors $R_{1},R_{2},...,R_{d}$ we have
$$ (R_k)^T=\sum_{j=1}^d\frac{b_j(i_k)}{y'(i_k)}(R_j)^T,$$ therefore $$\frac{b_k(i_k)}{y'(i_k)}=1\;\text{and}\;\frac{b_j(i_k)}{y'(i_k)}=0,\;\text{ for any}\; j\neq k,$$  because the vectors $R_i$ are linearly independent.  Therefore
$b_k(i_k)>0$ and $b_j(i_k)=0$ for any $j\neq k$  and      $\{i_1,i_2,...,i_d\}$ is a set of nodes.

In the case  where $d=r$,  according to Theorem~\ref{Prop4}, the basis of $Z$ is given again  by (\ref{e3}) where  $L$ is the matrix with columns the vertices  $P_1,...,P_d$ of $K$. We repeat the above proof where  in the place of $R_i$ we have the vectors $P_i$ and  we  find again a set of nodes $\{i_1,i_2,...,i_d\}$  of the basis $\{b_1,b_2,...,b_d\}$.
\end{proof}

In the next result we give a factorization of $A$.  As we have also noted before, the intermediate dimension $p$ of the factors $F,V$ is equal to the number $d$ of vertices of $K$ and is determined before the  determination of the positive basis of $Z$.
Underline  the case where $\rank(A)=2$ because then  the intermediate dimension of the factors is equal to $2$.
\begin{theorem}\label{1}({\bf Matrix factorization}) Suppose that $A$ is a nonnegative $n\times m$ real matrix, without zero columns, $a_i=(a_{i1},a_{i2},...,a_{im})$, $i=1,2,...n,$ are the rows of $A$,
   $\{y_{1},y_{2},...,y_{r}\}$ is a basic set of the rows of
 $A$, $\beta$ is the basic function of $y_{1},y_{2},...,y_{r}$,  $K$ is the convex polytope generated by the range $R(\beta)$ of $\beta$ and $d$ is the number of the vertices of $K$. Then there exist nonnegative   real matrices $F$, $V$ of intermediate dimension $d$ so that \begin{equation}\label{f}A=FV\end{equation} which are determined as follows:\\
We determine a positive basis $\{b_1,b_2,...,b_d\}$ of  a  minimal lattice-subspace $Z$  of $\mathbb{R}^m$ which contains the vectors
$y_{1},y_{2},...,y_{r}$ and a  set of nodes $\{i_1,i_2,...,i_d\}$ of this basis. Then
  $F$ is the $n\times d$ matrix so that for any $k=1,2,...,d$  the $k$-column of $F$ is the  $i_k$-column of $A$ multiplied by $\frac{1}{b_k
(i_k)}$  and $V$ is the  $ d\times m$ matrix with rows the vectors $b_1,b_2,...,b_d$.\\
For this factorization of $A$ we discriminate the cases:

$(i)$ If r=2, then $d=2$  and (\ref{f}) is an exact rank factorization of $A$.

$(ii)$ If  the function $\beta$ takes exactly $r$ different values,  then (\ref{f}) is a rank factorization of $A$.

$(iii)$ If $K$ has $r$ vertices, then $r=d$  and (\ref{f}) is a rank factorization of $A$.

$(iv)$ If  $d=m$,  (\ref{f})  is a trivial the factorization of $A$ (i.e. the set of rows of  $V$ is a positive basis of $\mathbf{R}^m$).

\end{theorem}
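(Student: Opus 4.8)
The plan is to obtain the factorization directly from the positive basis of $Z$ and then recover the stated form of $F$ from a node set. First I would fix the objects produced by the construction that precedes the statement (Theorems~\ref{Prop4} and~\ref{Prop5}): the subspace $X=[y_1,\dots,y_r]$; a minimal lattice-subspace $Z$ of $\mathbb{R}^m$ with $X\subseteq Z$ and $\dim Z=d$, where $d$ is the number of vertices of $K$; a positive basis $\{b_1,\dots,b_d\}$ of $Z$ with each $b_k\in\mathbb{R}^m_+$; and -- here Theorem~\ref{0} enters -- a set of nodes $\{i_1,\dots,i_d\}$ of that basis, so that $b_k(i_k)>0$ and $b_j(i_k)=0$ for every $j\ne k$.

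Next comes the core of the argument. Every row $a_i$ of $A$ lies in $X\subseteq Z$ and is nonnegative, hence $a_i$ belongs to the positive cone $Z_+=Z\cap\mathbb{R}^m_+$. Since $\{b_1,\dots,b_d\}$ is a \emph{positive} basis of $Z$, the cone $Z_+$ is exactly $\{\sum_{k=1}^d\lambda_k b_k:\lambda_k\ge 0\}$, so $a_i$ has a unique expansion $a_i=\sum_{k=1}^d c_{ik}b_k$ with all $c_{ik}\ge 0$. Put $F=(c_{ik})$, an $n\times d$ nonnegative matrix, and let $V$ be the $d\times m$ matrix whose $k$-th row is $b_k$, which is nonnegative because $b_k\ge 0$. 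Comparing $j$-th coordinates gives $(FV)_{ij}=\sum_{k=1}^d c_{ik}b_k(j)=a_i(j)=a_{ij}$, so $A=FV$ with intermediate dimension $d$. To recover the stated description of $F$, evaluate $a_i=\sum_{j}c_{ij}b_j$ at the coordinate $i_k$: the node relations give $a_i(i_k)=c_{ik}b_k(i_k)$, hence $c_{ik}=a_i(i_k)/b_k(i_k)$; that is, the $k$-th column of $F$ equals $\frac{1}{b_k(i_k)}\,a^{i_k}$, the $i_k$-column of $A$ multiplied by $1/b_k(i_k)$, exactly as claimed.

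It remains to dispatch the four cases, for which I would use that $r=\rank(A)$ (a basic set is a maximal linearly independent set of rows) together with $r\le d$; a factorization with $d=r$ is then automatically a rank factorization. Case (iii) is this remark verbatim. For (ii), if $\beta$ takes exactly $\mu=r$ values, the vertices of $K$ lie among the $r$ points of $R(\beta)$, so $d\le\mu=r$ and hence $d=r$. For (i), when $r=2$ the simplex $\Delta\subseteq\mathbb{R}^2$ is the segment joining the two unit vectors, and $R(\beta)$ is a finite subset of it containing two linearly independent (hence distinct) points by Lemma~\ref{l1}; thus $K$ is a non-degenerate segment, which has exactly two vertices, so $d=2=r$ and we are back in case (iii). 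For (iv), $d=m$ forces $Z$ to be an $m$-dimensional subspace of $\mathbb{R}^m$, i.e. $Z=\mathbb{R}^m$, so the rows $b_1,\dots,b_d$ of $V$ form a positive basis of $\mathbb{R}^m$ and the factorization is trivial.

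The one step I expect to need real care is the use of the positive-basis property in the second paragraph: one must justify cleanly that ``$a_i\in Z_+$ and $\{b_k\}$ a positive basis of $Z$'' forces all coordinates $c_{ik}$ to be nonnegative, i.e. that $Z_+$ is precisely the cone generated by $b_1,\dots,b_d$. After that, the matrix identity, the node computation, and the small polytope-combinatorics arguments for the four cases are all routine.
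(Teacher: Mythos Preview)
Your proposal is correct and follows essentially the same line as the paper's proof: expand each row $a_i\in Z_+$ in the positive basis $\{b_1,\dots,b_d\}$ to get nonnegative coefficients, read off $F$ and $V$, use the node relations to identify the $k$-th column of $F$ as $a^{i_k}/b_k(i_k)$, and handle the four cases by the same simplex/vertex-counting observations. The only cosmetic differences are that the paper invokes Lemma~\ref{l1} in case~(ii) to assert directly that the $r$ values of $\beta$ are linearly independent (hence all vertices), whereas you argue $d\le\mu=r$; and your flagged ``care'' point about $Z_+=\{\sum\lambda_k b_k:\lambda_k\ge0\}$ is exactly the definition of a positive basis given in the Appendix, so no extra work is needed there.
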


\begin{proof} Let $Z$ be a minimal lattice-subspace   of $\mathbb{R}^m$ which contains the vectors
$y_{1},y_{2},...,y_{r}$ and suppose that $$\{b_1,b_2,...,b_d\}$$
is a positive basis of $Z$ constructed by the method described before the theorem.
Then for any row $a_i$ of $A$, $a_i$ belongs to $Z$ because $Z$ is a subspace of
$\mathbb{R}^m$ containing a maximal set of linearly independent rows of $A$ and suppose that
$$a_i=\sum_{j=1}^d f_{ij}b_j.$$
Therefore, by (~\ref{fact}),  we have $$A=FV,$$ where $F=(f_{ij})$ is the $n\times d$ matrix of the coefficients of the vectors $a_i$ in the basis $\{b_1,b_2,...,b_d\}$ and $V$ is the matrix with rows the vectors $b_i$. Of course we have $f_{ij}\geq 0$ for any $i,j$ because $\{b_1,b_2,...,b_d\}$ is a positive basis of $Z$ and $a_i\in Z_+$ for any $i$.
We determine now the coefficients $f_{ij}$  as follows: We take a set of nodes  $\{i_1,i_2,...,i_d\}$   of the basis $\{b_1,b_2,...,b_d\}$. For any fixed $k\in\{
 1,2,...,d\}$   we have
 $b_k(i_k)>0$ and $b_j(i_k)=0$ for any $j\neq k$. Therefore for any $i$ we have
$$a_i(i_k)=\sum_{j=1}^d f_{ij}b_j(i_k)=f_{ik}b_k(i_k)$$ hence  $$f_{ik}=\frac{a_i(i_k)}{b_k(i_k)}=\frac{a_{ii_k}}{b_k(i_k)},\;\text{for any }\;i=1,2,...,n.$$
Therefore,  the $k$-column of $F$ is the  $i_k$-column of $A$ multiplied by $\frac{1}{b_k
(i_k)}$.

$(i)$ Suppose that $r=2$. Then $\{y_1,y_2\}$ is a basic set of the rows of $A$. We shall show that $d=2$. The basic function $\beta$ of $y_1,y_2$, takes values on the one-dimensional simplex $\Delta$ of
$\mathbf{R}^2_+$ defined by the points $(1,0)$ and $(0,1)$ of $\mathbf{R}^2$.
Therefore the convex polytope $K$ generated  by the values  of $\beta$ is a
line segment defined by two values of $\beta$, the ones with
minimum and maximum first coordinate. Therefore $K$ has two vertices hence  $d=2$. So  $F,V$ are  $n\times 2$, $2\times m$ matrices and (~\ref{f}) is a rank factorization of $A$.

$(ii)$ Suppose that $\beta$ has $r$ different values. Then by Lemma ~\ref{l1}, the values of $\beta$ are linearly indpendent, therefore the convex polytope generated by the valyes of $\beta $ has $r$ vertices. So $d=r$ and  (~\ref{f}) is a rank factorization of $A$.

$(iii)$ If $d=r$, then (~\ref{f}) is again  a rank factorization of $A$.

$(ii)$ If $d=m$, then   $Z=\mathbf{R}^m$ because $Z$ id a $d$-dimensional subspace of $\mathbf{R}^m$. So $\{b_1,b_2,...,b_d\}$ is a  positive basis  of $\mathbf{R}^m$ and the factorization (~\ref{f}) is trivial.
\end{proof}

\begin{remark} {\rm In  the above theorem we have defined the factorization $A=FV$ of $A$ as   {\it trivial } if the set of rows of  $V$ is a positive basis of $\mathbf{R}^m$. Recall that a  positive basis  of $\mathbf{R}^m$ is unique in the sense of positive multiples and  permutations. So, if  the rows of $V$ are the vectors of the positive basis $\{e_1,...,e_m\}$, then $\{1,...,m\}$ is the set of nodes of the basis and according to Theorem~\ref{1} the columns of $F$ are the columns of $A$ multiplied by $1$, therefore  $F=A$ and $V=I_m$ is the identical matrix. In the case of  an other positive basis  of $\mathbf{R}^m$,  the columns of $F$ are again positive multiples of the columns of $A$  but maybe  in an other  order.}
\end{remark}

\subsection{The case $\rank(A)=2$}\label{rank} We discuss the case $r=2$ or equivalently $\rank(A)=2$. This case has been studied    in ~\cite{Cohen} and ~\cite{Amb} where  an algorithmic process  has been proposed  for the   determination of a nonnegative rank factorization of $A$.
We present here, as a partial part of the Factorization Theorem,  a  simple and very easy way, for the determination of a NRF of $A$.

  If  $r=2$,  we start by a basic set $\{y_1,y_2\}$ of the rows of $A$ and we take the basic function  $\beta$ of $y_1,y_2$. The vales of $\beta$ are on the one-dimensional simplex $\Delta$ of
$\mathbf{R}^2_+$ defined by the points $(1,0)$ and $(0,1)$, therefore
 the convex polytope $K$ generated  by $R(\beta)$ is the
line segment defined by two values of $\beta$, the ones with
minimum and maximum first coordinate. So if $a$ is the minimum and $b$ is the maximum first coordinate of the values of $\beta$ then $K$ is the line segment defined by the points $P_1=(a,1-a)$ and $P_2=(b,1-b)$ and these points are the vertices of $K$.  Therefore a positive basis of the minimal lattice-subspace $Z$ which contains $y_1,y_2$, is given by the formula
 $$(b_1,b_2)^T=L^{-1}(y_1,y_2)^T,$$ where $L$ is the $2\times 2 $ matrix with columns the vectors $P_1$ and $P_2$  and
 it is very easy to find a set of nodes $\{i_1,i_2\}$ of the basis. Then $F$ is the $n\times 2$ matrix with first column the $i_1$-column of $A$ multiplied by $\frac{1}{b_1(i_1)}$ and the second column of $F$ is the $i_2$-column of $A$ multiplied by $\frac{1}{b_2(i_2)}$ and $V$ is the $2\times m$ matrix with rows the vectors $b_1,b_2$ of the positive basis of $Z$. For an application see Example ~\ref{ex3}.

\section{Examples}
 In the next examples the matrix   $A$ is  without zero columns.

\begin{example}\label{ex1}\rm{ In this example we  show    the way the algorithmic process  is working. Also    two different minimal lattice-subspaces which contain the vectors  $y_i$ are appeared.
Suppose that
$$ A=\begin{bmatrix}
               1&2&1&2&0&0\\
               0&3&0&4&2&1\\
               0&3&1&5&2&0\\
               1&5&1&6&2&1\\
               1&5&2&7&2&0\\
               0&6&1&9&4&1
               \end{bmatrix}.$$
We find that $\{y_1=a_1,y_2=a_2,y_3=a_3\}$ is a basic set, i.e.
a maximal set of linearly independent rows of $A$.
  The basic function  of the vectors
$y_i$ is $$\beta(i) =\frac{1}{y(i)}(y_1(i),y_2(i),y_3(i)),\;i=1,...,6,$$ where
$y=(1, 8, 2, 11, 4, 1)$ is the sum of the vectors $y_i$.  The values of $\beta$ are on the simplex $\Delta$ of $\mathbf{R}^3_+$ and we have:
 $\beta(1)=(1,0,0)$,
$\beta(2)=\frac{1}{8}(2,3,3)$,
$\beta(3)=\frac{1}{2}(1,0,1)$,
$\beta(4)=\frac{1}{11}(2,4,5)$,
 $\beta(5)=\frac{1}{2}(0,1,1)$ and
$\beta(6)=(0,1,0)$ and
 $$R(\beta)=\{\beta(1),\beta(2),\beta(3),\beta(4),\beta(5),\beta(6)\}$$ is the range of $\beta$.
We determine  the vertices of the convex polytope  $K$ generated by $R(\beta)$ and we
  find  that
   $P_1=\beta(1),P_2=\beta(6),P_3=\beta(5),P_4=\beta(3)$ are the vertices of $K$. Therefore   $d=4$  and the intermediate dimension of the factors $F,V$  will be equal to $4$.

   We take a new
   enumeration of the vertices of $K$, which we denote again  by $P_1,P_2,P_3,P_4$   so that the first three  of them ($r=3$) to be
   linearly independent. Of course there are four  such enumerations and the above  is one of theme but for compatibility, we
   adopt the next one of our matlab program:
 $$P_1=\frac{1}{2}(0,1,1)=\beta(5), P_2=(0,1,0)=\beta(6),P_3=\frac{1}{2}(1,0,1)=\beta(3),P_4= (1,0,0)=\beta(1).$$
We expand the values of $\beta$ as convex combinations of the vertices of $K$ i.e.
 $$\beta(i)=\sum_{j=1}^4\xi_j(i)P_j,$$ for any $i$, where $\xi_j(i)\geq 0$ and $\sum_{j=1}^4\xi_j(i)=1$.
Except of the case where $\beta(i)$ is a vertex of $K$, this
expansion is not necessarily unique.
From this point we continue by two ways. In the first one,   we follow the
steps of  the algorithm without the use of a computer and in the
second  we follow the computations of  the matlab program.\\
So we have:
 $\beta(1)=P_4$, therefore $\beta(1)=\sum_{i=1}^4\xi_i(1)P_i,$ with $\xi_i(1)=0$ for $i=1,2,3$ and $\xi_4(1)=1$. $\beta(2)$ is not a vertex. We find that
   $\beta(2)=\sum_{i=1}^4\xi_i(2)P_i$, and it is easy that    $\xi_1(2)=\frac{3}{4},$  $\xi_2(2)=\xi_2(2)=0$ and  $\xi_4(2)=\frac{1}{4}$. $\beta(3)=P_3$, therefore  $\xi_i(3)=0$ for $i=1,2,4$ and $\xi_3(3)=1$. $\beta(4)$ is not a vertex and we find that $\xi_1(4)=\frac{6}{11}$, $\xi_2(4)=\frac{1}{11}$, $\xi_3(4)=\frac{4}{11}$, $\xi_4(4)=0$. $\beta(5)=P_1$, so $\xi_1(5)=1$ and  $\xi_i(5)=0$  for $i=2,3,4$ and $\beta(6)=P_2$, so $\xi_2(6)=1$ and  $\xi_i(6)=0$  for $i=1,3,4$. The next is the  matrix with rows the coefficients of $\beta(i)$
 $$ H2=\begin{bmatrix}
               0&0&0&1\\
               \frac{3}{4}&0&0&\frac{1}{4}\\
               0&0&1&0\\
               \frac{6}{11}&\frac{1}{11}&\frac{4}{11}&0\\
               1&0&0&0\\
               0&1&0&0
               \end{bmatrix}.$$
According to the algorithm we define $d-r$ new vectors, therefore
 we define one new vector $y_4$ of $\mathbb{R}^6$  as
follows: $y_4(i)=\xi_4(i)y(i)$ for any $i$, where
$y=y_1+y_2+y_3=(1,8,2,11,4,1)$, therefore $$y_4=(1,2,0,0,0,0).$$
 Then  $Z=[y_1,y_2,y_3,y_4]$ is  a minimal lattice-subspace  containing $y_1,y_2,y_3,y_4$ and a
 positive basis $\{b_1,b_2,b_3,b_4\}$ of  $Z$
 is given by the formula
 $$(b_1,b_2,b_3,b_4)^T=L^{-1}(y_1,y_2,y_3,y_4)^T,$$  where $L$ is the matrix with rows the vectors
 $R_1,R_2,R_3,R_4$ and the the vectors $R_i$ are the following:
 $R_1=(P_1,0)=\frac{1}{2}(0,1,1,0)$, $R_2=(P_2,0)=(0,1,0,0)$, $R_3=(P_3,0)=\frac{1}{2}(1,0,1,0)$,
 $R_4= \frac{1}{2}(P_4,e_1)=\frac{1}{2}(1,0,0,1)$. We find that
$$b_1=( 0,    6,     0,     6,     4,     0), b_2=(0,     0,     0,     1,     0,     1),
    b_3=( 0,     0,     2,     4,     0,     0), b_4=( 2,     4,     0,     0,    0,     0),$$
 is the positive basis of $Z$.
The set of indexes     $\{i_1=5,i_2=6,i_3=3,i_4=1\}$ is a  set of
nodes of the basis. Therefore  $A=FV$ where $V$ is the matrix with
rows the vectors $b_i$ and $F$ is the $6\times 4$ matrix so that
the first column of $F$ is the fifth column of $A$ multiplied by
$\frac{1}{b_1(i_1)}=\frac{1}{4}$, the second  column of $F$ is the
sixth column of $A$ multiplied by $\frac{1}{b_2(i_2)}=1$, the
third column of $F$ is the third column of $A$ multiplied by
$\frac{1}{b_3(i_3)}=\frac{1}{2}$ and the fourth  column of $F$ is
the first column of $A$ multiplied by
$\frac{1}{b_4(i_4)}=\frac{1}{2}$. So we find that

$$F=\begin{bmatrix}
               0&0&\frac{1}{2}&\frac{1}{2}\\
               \frac{1}{2}&1&0&0\\
               \frac{1}{2}&0&\frac{1}{2}&0\\
               \frac{1}{2}&1&\frac{1}{2}&\frac{1}{2}\\
              \frac{1}{2}&0&1&\frac{1}{2}\\
               1&1&\frac{1}{2}&0
               \end{bmatrix},\;\;V=\begin{bmatrix}0&    6&    0&     6&     4&     0\\ 0&     0&     0&     1&     0&     1\\
          0&     0&     2&     4&     0&     0\\  2&     4&     0&     0&    0&     0\end{bmatrix}$$
         and it is easy to check that  $FV=A$.

  In the second way, following  our matlab computations we obtain  as above the same values of $\beta$ and the convex polytope $K$. We find that

      $$ H2=\begin{bmatrix}
               0&0&0&1\\
               0.4&0.175&0.35&0.075\\
               0&0&1&0\\
               0.5572&0.0851&0.3519&0.0059\\
               1&0&0&0\\
               0&1&0&0
               \end{bmatrix},$$
is a matrix with rows  convex combinations of the values  of $\beta$ in the vertices of $K$ and we find that
     $$y_4=(  1,    0.6,         0,    0.0644,         0,         0),$$ is the new
     vector. We remark  that     we
     have found different convex combinations for $\beta(2)$ and $\beta(4)$ and also that $y_4$ is different than the previous one.  $Z=[y_1,y_2,y_3,y_4]$ is a minimal lattice-subspace  containing $y_1,y_2,y_3$ but different from  the one of the fist case because  $y_4$ is not    a linear combination of the vectors of the positive basis of $Z$ of the first case.
     According to our matlab program we find that the rows of the matrix
     $$U= \begin{bmatrix}0&   3.2000&         0&    6.1288&    4.0000&         0\\
         0&    1.4000&         0&    0.9356&         0&    1.0000\\
         0&    2.8000&    2.0000&    3.8712&        0&         0\\
    2.0000&    1.2000&         0&    0.1288&         0&         0\end{bmatrix},$$
     are the  the vectors of a positive basis $\{b_1,b_2,b_3,b_4\}$ of  $Z$  and also that
   $\{i_1=5,i_2=6,i_3=3,i_4=1\}$ is a set of nodes of this basis. We take the factors
 $$   F =\begin{bmatrix}
         0&         0&   0.5000&    0.5000\\
    0.5000&    1.0000&         0&0\\
    0.5000&         0&   0.5000& 0\\
    0.5000&    1.0000&   0.5000&    0.5000\\
    0.5000&         0&    1.0000&   0.5000\\
    1.0000&    1.0000&    0.5000&         0\end{bmatrix},\; V=U$$
 of $A$ and the test matrix  $R=A-FV=0$.}

\end{example}

\begin{example}\label{ex2}\rm{Suppose that

$$ A=\begin{bmatrix}
    1&2&2&1&2&2&1&2&1&3\\2&1&4&0&2&2&1&0&1&6\\1&3&2&1&4&0&2&2&0&3\\
    0&1&0&2&2&6&1&4&3&0\\1&1&2&1&0&4&0&2&2&3\\0&0&0&0&4&2&2&0&1&0\\
    1&1&2&1&2&4&1&2&2&3\\2&2&4&3&0&0&0&6&0&6\end{bmatrix}.$$

We find that

$\{y_1=a_1,y_2=a_2,y_3=a_3, y_4=a_4, y_5=a_6\}$ is a basic set and  $$X=\begin{bmatrix}

     1&     2&     2&     1&     2&     2&     1&     2&     1&     3\\
     2&     1&     4&     0&     2&     2&     1&     0&     1&     6\\
     1&     3&     2&     1&     4&     0&     2&     2&     0&     3\\
     0&     1&     0&     2&     2&     6&     1&     4&     3&     0\\
     0&     0&     0&     0&     4&     2&     2&     0&     1&     0\end{bmatrix}$$

is the matrix with rows the vectors $y_i$.  The sum of the vectors $y_i$  is $$y=( 4,  7,     8,     4,    14,    12,     7,     8,     6,    12).$$
We find that $\beta(1)=\beta(3)=\beta(10)=\frac{1}{4}(1,2,1,0,0)$, $\beta(2)=\frac{1}{7}(2,1,3,1,0)$, $\beta(4)=\beta(8)=\frac{1}{4}(1,0,1,2,0)$,
$\beta(5)=\beta(7)=\frac{1}{7}(1,1,2,1,2)$, $\beta(6)=\beta(9)=\frac{1}{6}(1,1,0,3,1)$ and
  $$R(\beta)= \{P_1=\beta(5),P_2=\beta(6),P_3=\beta(4),P_4=\beta(1),P_5=\beta(2)\}.$$ The cardinal number of $R(\beta)$ is equal to $r$. This ensures a rank factorization of $A$ and also that
the subspace $X$ generated by the vectors $y_i$ is  lattice-subspace (especially $X$ is a sublattice).
Hence $Z=X$ is the minimal lattice-subspace which contains the vectors $y_i$ and a positive basis of $Z$
 is given by the formula
$$(b_1,b_2, b_3,b_4,b_5)^T=L^{-1}(y_1,y_2,y_3,y_4,y_5)^T,$$
where $L$ is the $5\times 5$ matrix with columns the vectors
$P_1,P_2,P_3,P_4,P_5$ of $R(\beta)$. We find that
 $$b_1=(0,0,0,0,14,0,7,0,0,0),b_2=(0,0,0,0,0,12,0,0,6,0),$$
$$ b_3=(0,0,0,4,0,0,0,8,0,0), b_4=(4,0,8,0,0,0,0,0,0,12),b_5=(0,7,0,0,0,0,0,0,0,0).$$
is  a positive basis of  $Z$ and
 $\{i_1=5,i_2=6, i_3=4,i_4=1,i_5=2\}$ is  a set of nodes of this basis. Therefore $F$ is a $10\times 5$ matrix with columns
 the fifth  column of $A$ multiplied by $\frac{1}{b_1(5)}=\frac{1}{14}$, the sixth column of $A$ multiplied by $\frac{1}{b_2(6)}=\frac{1}{12}$, the fourth column of $A$ multiplied by $\frac{1}{b_3(4)}=\frac{1}{4}$,  the first column of $A$ multiplied by $\frac{1}{b_4(1)}=\frac{1}{4}$ and  the second column of $A$ multiplied by $\frac{1}{b_5(2)}=\frac{1}{7}$. Therefore

 $$F=\begin{bmatrix}\frac{2}{14}&\frac{2}{12}&\frac{1}{4}&\frac{1}{4}&\frac{2}{7}\\
                               \frac{2}{14}&\frac{2}{12}&\frac{0}{4}&\frac{2}{4}&\frac{1}{7}\\
                                \frac{4}{14}&\frac{0}{12}&\frac{1}{4}&\frac{1}{4}&\frac{3}{7}\\
                               \frac{2}{14}&\frac{6}{12}&\frac{2}{4}&\frac{0}{4}&\frac{1}{7}\\
                            \frac{0}{14}&\frac{4}{12}&\frac{1}{4}&\frac{1}{4}&\frac{1}{7}\\
                            \frac{4}{14}&\frac{2}{12}&\frac{0}{4}&\frac{0}{4}&\frac{0}{7}\\
                            \frac{2}{14}&\frac{4}{12}&\frac{1}{4}&\frac{1}{4}&\frac{1}{7}\\
                            \frac{0}{14}&\frac{0}{12}&\frac{3}{4}&\frac{2}{4}&\frac{2}{7}\end{bmatrix}\;\;,
                             V=\begin{bmatrix}0&0&0&0&14&0&7&0&0&0\\0&0&0&0&0&12&0&0&6&0\\
0&0&0&4&0&0&0&8&0&0\\4&0&8&0&0&0&0&0&0&12\\0&7&0&0&0&0&0&0&0&0
\end{bmatrix}$$
with  $R=A-FV=0$ and  $A=FV$ is a NRF of $A$.}

\end{example}

\begin{example}\label{ex3}{\bf (rank(A)=2)}
{\rm Suppose that $A$ is a $n\times 16$ nonnegative real matrix, $\{y_1,y_2\}$ is a basic set  of the rows of $A$, i.e. $r=2$  and  that
 $$X=\left[\begin{array}{rrrrrrrrrrrrrrrr}1&2&7&3&6&4&5&8&4&2&3&9&4&7&9&1\\
3&6&7&8&6&3&2&3&2&5&6&5&9&8&7&4\end{array}\right],$$ is the matrix with rows the vectors $y_1,y_2$ of the basic set.
The values of $\beta$ are on the one-dimensional simplex $\Delta$ of $\mathbf{R}^2_+$, therefore the convex polytope generated by $R(\beta)$ is the line segment defined by the values of $\beta$ with  minimum  and  maximum first coordinate. We find that $b(16)=\frac{1}{5}(1,4)$, is the value of $\beta$ with minimum  first coordinate and
 $b(8)=\frac{1}{11}(8,3)$  the one with maximum, therefore $b(16)$ and
 $b(8)$ are the vertices of $K$. By Theorem~\ref{Prop5}, a positive basis $\{b_1,b_2\}$ of the minimal lattice-subspace $Z$ which contains $y_1,y_2$ is given by the formula $(b_1,b_2)^T=L^{-1}X$ , where  $L$ is the matrix with columns the vectors $b(16)$,
 $b(8)$. We have  $$(b_1,b_2)^T=L^{-1}X=-\frac{55}{29}\left[\begin{array}{rrrr}\frac{3}{11}&-\frac{8}{11}\\
                                                                     -\frac{4}{5}&\frac{1}{5}\end{array}\right] X= -\frac{1}{29}\left[\begin{array}{rrrr}15&-40\\
                                                                     -44&11\end{array}\right] X . $$
 We find that
  $$b_1=\frac{1}{29}(105,   210,   175,   275,   150,    60,     5,     0,    20,   170,   195,    65,   300,   215,   145,   145),$$
  $$b_2=\frac{1}{29}(11,    22,   231,    44,   198,   143,   198,   319,   154,    33,    66,   341,    77,   220,   319,     0)$$
 and  $\{i_1=16,i_2=8\}$ is a set nodes of the basis. Therefore   $A=FV$ is a NRF of $A$ where $F$ is the  matrix with first column the $16$-column of $A$ multiplied by $\frac{1}{b_1(16)}=\frac{29}{145}$ the second column of $F$ is the $8$-column of $A$ multiplied by $\frac{1}{b_2(8)}=\frac{29}{319}$  and $V$ is the matrix with rows the vectors $b_1,b_2$.}
\end{example}

\begin{example}\label{ex20}\rm{Suppose that

$$ A=\left[\begin{array}{rrrrrrrrrrrrrrrr}
    2  &   1  &   0  &   0 &    0   &  0 &    1  &   2  &   1 &    1  &   2\\
     1  &   2 &    2 &    1  &   0  &   0  &   2  &   1  &   2  &   0  &   0\\
     0  &   0   &  1   &  2  &   2 &    1  &   2  &   1  &   1  &   0  &   1\\
     0  &   0  &   0  &   0  &   1   &  2  &   1  &   2  &   2  &   1   &  3\\
     3   &  3  &   2  &   1  &   0  &   0  &   3  &   3  &   3  &   1  &   2\\
     1  &   2  &   3   &  3  &   2  &   1  &   4   &  2  &   3   &  0   &  1\\
     0  &   0   &  1   &  2   &  3  &   3  &   3   &  3   &  3  &   1  &   4\\
     2  &   1  &   0  &   0   &  1   &  2   &  2   &  4  &   3  &   2  &   5 \end{array}\right].$$

Following the matlab program we find that
$\{y_1=a_1,y_2=a_2,y_3=a_3, y_4=a_4\}$ is a basic set,   the convex polytope $K$ generated by $R(\beta)$ has seven vertices therefore $d=7$ is the indermediate dimension of the factors.  $d-r=3$, therefore three new vectors $y_5,y_6,y_7$ are determined and  a positive basis of the minimal lattice subspace which contains the rows of $A$ is determined. The next factors of are given

$$ F=\left[\begin{array}{rrrrrrrrrrrrrrrr} 0 &        0  &    &    0 &    0.3333  &        0 &    0.2500  &   0.3333\\
         0  &         0  &    0.3333  &    0.6667  &    0.3333    &       0  &    0.1667\\
    0.3333  &    0.6667  &    0.6667   &        0  &    0.1667     &      0   &        0\\
    0.6667  &    0.3333   &        0    &       0    &       0  &    0.2500    &       0\\
         0   &        0  &    0.3333  &    1.0000  &    0.3333  &    0.2500  &    0.5000\\
    0.3333  &    0.6667  &    1.0000  &    0.6667   &   0.5000    &       0  &    0.1667\\
    1.0000  &    1.0000  &    0.6667    &       0  &    0.1667  &    0.2500    &       0\\
    0.6667  &    0.3333    &       0  &    0.3333   &        0  &    0.5000  &    0.3333\end{array}\right],$$

$$ V=\left[\begin{array}{rrrrrrrrrrrrrrrr}
         0     &    0     &    0    &     0     &    0   & 3.0000  &  0.6923  &  1.2688   & 3.0000   &      0  &  1.0000\\
         0     &    0     &    0     &    0   & 3.0000    &     0  &  0.9231 &   0.6918  &       0   &      0  &  1.0000\\
         0     &    0     &    0   & 3.0000    &     0    &     0  &  1.1538  &  0.1159  &       0   &      0   &      0\\
         0   & 3.0000      &   0    &     0    &     0    &     0  &  0.9231  &  0.6918  &  3.0000    &     0   &      0\\
         0  &       0   & 6.0000     &    0    &     0     &    0   & 2.3077  &  0.2317  &       0    &     0    &     0\\
         0    &     0     &    0     &    0    &     0    &     0  &  0.9231  &  3.6940  &       0    &4.0000   & 8.0000\\
    6.0000    &     0    &     0     &    0    &     0    &     0  &  1.3846  &  2.5377   &      0    &     0   &      0\end{array}\right].$$

and the test matrix $R=A-FV=0$, under a very small round off error.}
\end{example}

\begin{example}\label{Camb}{\rm In ~\cite{Camp}, page 180, the nonnegative rank factorization of the matrix
$$A=\left[\begin{array}{rrrrrrrrrrrrrrrr}4&0&4&11\\0&2&2&0\\4&0&4&8\\5&0&5&15\\1&1&2&2\end{array}\right]$$
is determined, as an example of the   algorithm expanded in ~\cite{Camp} for   rank factorization.\\
By applying   Theorem ~\ref{1} we determine also a NRF of $A$  as follows: We find that $\{y_1=a_1,y_2=a_2,y_3=a_3\}$ is a basic set,
 $\beta(1)=(\frac{1}{2},0,\frac{1}{2})$,
$\beta(2)=(0,1,0)$,
$\beta(3)=(\frac{2}{5},\frac{1}{5},\frac{2}{5})$,
$\beta(4)=(\frac{11}{19},0,\frac{8}{19})$ are the values of $\beta$ of the vectors
$y_i$ and that  $\beta(1),\beta(2),\beta(4)$ are the vertices of the convex polytope $K$ generated by the values of $\beta$. Therefore  $r=d=3$ and  the factorization $A=FV$  of Theorem ~\ref{1} is a rank factorization of $A$. A positive basis of $Z$ is given by the formula $(b_1,b_2,b_3)^T=L^{-1}(y_1,y_2,y_3)^T$ where $L$ is the matrix with columns  $\beta(1),\beta(2),\beta(4)$. We have $b_1=(0,2,2,0),b_2=(8,0,8,0), b_3=(0,0,0,19)$ and  $\{i_1=2,i_2=1,i_3=4\}$ is a set of nodes.  Therefore $A=FV$, where the first column of $F$ is the second column of $A$ multiplied by $\frac{1}{2}$, the second  is the first column of $A$ multiplied by $\frac{1}{8}$ and the third  is the fourth  column of $A$ multiplied by $\frac{1}{19}$ and $V$ is the matrix with rows the vectors $b_i$.
So we have
$$F=\begin{bmatrix}
               0&\frac{1}{2}&\frac{11}{19}\\
               1&0&0\\
               0&\frac{1}{2}&\frac{8}{19}\\
               0&\frac{5}{8}&\frac{15}{19}\\
              \frac{1}{2}&\frac{1}{8}&\frac{2}{19}
               \end{bmatrix},\;\;V=\begin{bmatrix}0&    2&    2&     0 \\ 8&     0&     8&    0\\
          0&     0&     0&     19\end{bmatrix}$$
         with   $FV=A$. This factorization is in fact the same with the one of ~\cite{Camp}. }
\end{example}

\subsection{ $A$ contains a  diagonal principal submatrix of the same rank }\label{diag}

Suppose  that   $A$  {\it contains a  diagonal principal submatrix} of the same rank, i.e. that under a set of permutations between rows of $A$  and permutations between columns of $A$ we take a new matrix $\overline{A}$ which has a diagonal $k\times k$ submatrix $D_k$ so that $\rank(A)=\rank(D_k)=k$.\\
   We  apply the algorithmic process for the factorization of the new matrix $\overline{A}$.\\
The rows $y_1,...,y_k$ of $\overline{A}$ corresponding to $D_k$ define a basic set of the rows of $\overline{A}$  and consider the basic function $\beta$ of the vectors $y_1,...,y_k$. Suppose that the $j$-columns of $\overline{A}$, for  $j=\nu,...,\nu+k$, are the columns of $\overline{A}$ corresponding to $D_k$. It is easy  that for any $i=\nu+t$ with  $0\leq t\leq k$ the value $\beta(i)$ of $\beta$ is the $t$-column of $D_k$ divided by its diagonal  element, therefore $\beta(i)$ is the $t$-vertex  $e_t$ of the simplex $\Delta$ of $\mathbf{R}^k_+$. Therefore the values $\beta(i)$ of $\beta$ for $i=\nu,...,\nu+k$, are the vertices $e_1,...e_k$ of the simplex $\Delta$ of $\mathbf{R}^k_+$. Since the values of $\beta$ are on the simplex $\Delta$ of $\mathbf{R}^k_+$, the convex polytope $K$ generated by the vales of $\beta$ is the whole simplex $\Delta$ and the vertices of $K$ are the vertices  $e_1,...,e_k$ of the simplex $\Delta$ of $\mathbf{R}^k_+$. So
 a positive basis of a minimal lattice-subspace $\overline{Z}$ which contains  $y_1,...,y_k$ and therefore also the rows of $\overline{A}$, is given by the formula
 $(b_1,...b_k)^T=L^{-1}(y_1,...,y_k)^T$ where $L$ is the matrix with columns the the vertices $e_1,...,e_k$ of $K$, therefore $L$ is the identical $k\times k$ matrix. This implies that  $\{y_1,...,y_k\}$ is a  positive basis of $\overline{Z}$ and also that  $\overline{Z}$  is the subspace generated by the rows of $\overline{A}$.\\
 The set  of indexes $\{\nu,...,\nu+k\}$ is a set of nodes of the basis $\{y_1,...,y_k\}$ because the columns $\nu,...,\nu+k$ of the matrix $\overline{V}$ with rows the vectors $y_i$ are the columns of the diagonal matrix $D_k$.
 Therefore $\overline{A}=\overline{F}\;\overline{V}$ where $\overline{F}$ is the $n\times k$ matrix so that for $j=1,...,k$ the $j$-column of $\overline{F}$ is the  $\nu+j$-columns of $\overline{A}$ multiplied by $\frac{1}{y_j(\nu+j)}$  and
  $\overline{V}$ is the matrix with rows $y_1,...,y_k$.\\
 In the sequel, by the inverse process, where, by (\ref{fact}), the  permutation of two columns in the initial matrix $A$ implies    the permutation of the corresponding columns in $\overline{V}$  and  the  permutation of two rows in initial matrix $A$ implies  the permutation of the corresponding rows in $\overline{F}$ we take from $\overline{F}$, $\overline{V}$ the new matrices $F$, $V$ which are factors of the initial matrix $A$, i.e. $A=FV$,  with intermediate dimension of $F,V$ equal to $k$.\\
 Therefore  we have proved the existence of a nonnegative rank factorization of $A$. But for the determination of the factors $F,V$ the set of permutations  for a diagonal submatrix $D_k$ is needed.\\
 In the next theorem we prove that the  factorization of $A$  which is given by   Theorem~\ref{1} is a rank factorization of $A$. So by Theorem~\ref{1} we can determine directly a rank factorization of $A$, independently from the knowing of the set of permutations  for a diagonal submatrix.

 \begin{theorem}\label{2} If $A$ is a $\;n\times m$ nonnegative real matrix which contains a  diagonal principal submatrix of the same rank  $k$  with the rank of $A$, then   the factorization  of $A$  determined by  Theorem~\ref{1}, is a nonnegative rank factorization of $A$.
 \end{theorem}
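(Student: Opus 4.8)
The plan is to prove that the subspace $X$ spanned by a basic set of the rows of $A$ is itself a lattice-subspace of $\mathbb{R}^m$. Once this is known, the minimal lattice-subspace $Z$ containing the basic set must coincide with $X$ (it is then the least, hence unique, such subspace), so the number $d$ of vertices of the polytope $K$ equals $\dim Z=\dim X=r=\rank(A)$, and by Theorem~\ref{1}, part $(iii)$, the factorization $A=FV$ it produces has intermediate dimension $\rank(A)$; that is exactly a nonnegative rank factorization of $A$.

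To obtain that $X$ is a lattice-subspace I would pass through the permuted matrix $\overline A$. By hypothesis some permutations of the rows of $A$ together with some permutations of the columns of $A$ produce $\overline A$, which contains a diagonal principal submatrix $D_k$ with $\rank(D_k)=k=\rank(A)$. Reordering the rows does not change the span of the rows, and a permutation of the columns is effected by a coordinate permutation $\tau$ of $\mathbb{R}^m$; hence the row space of $\overline A$ is exactly $\tau(X)$. Now $\tau$ is a linear bijection of $\mathbb{R}^m$ that maps the positive cone $\mathbb{R}^m_+$ onto itself, i.e. an order isomorphism, and order isomorphisms carry lattice-subspaces to lattice-subspaces; so it suffices to show that the row space of $\overline A$ is a lattice-subspace.

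This last fact is precisely what the discussion preceding the theorem establishes: taking as basic set the rows $y_1,\dots,y_k$ of $\overline A$ associated with $D_k$, the values of the basic function $\beta$ at the columns of $\overline A$ meeting $D_k$ are the vertices $e_1,\dots,e_k$ of the simplex $\Delta$ of $\mathbb{R}^k_+$, so $K=\Delta$, $d=k=r$, and by Theorem~\ref{Prop4} the set $\{y_1,\dots,y_k\}$ is a positive basis of the row space of $\overline A$; in particular that row space is a lattice-subspace. Transporting back by $\tau^{-1}$ gives that $X$ is a lattice-subspace, which, together with the first paragraph, finishes the argument.

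The only point requiring care is the reduction from $A$ to $\overline A$: one must be sure that the prescribed row and column permutations do not affect the lattice-subspace property, which is exactly the observation that reordering rows leaves the row space unchanged while reordering columns is an order automorphism of $\mathbb{R}^m$. One should also note that $X$ is independent of the chosen basic set, so the conclusion $d=r$ does not depend on which basic set is fed to Theorem~\ref{1}. Apart from this, the proof is a direct combination of the construction described before Theorem~\ref{1}, of Theorem~\ref{Prop4}, and of part $(iii)$ of Theorem~\ref{1}.
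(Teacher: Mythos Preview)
Your proof is correct and follows essentially the same route as the paper: both arguments show that the row space of $\overline A$ is a lattice-subspace (via the basic function taking the values $e_1,\dots,e_k$ on the columns of $D_k$), transport this property back to the row space of $A$ through the column permutation viewed as an order automorphism of $\mathbb{R}^m$, and conclude that the minimal lattice-subspace produced by the algorithm coincides with the row space, forcing $d=r$. The only difference is cosmetic: the paper phrases the last step as uniqueness of the minimal lattice-subspace $\mathbb{Z}$ and identifies it with $W$, whereas you appeal directly to Theorem~\ref{Prop4} and part~$(iii)$ of Theorem~\ref{1}; both routes give the same conclusion.
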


 \begin{proof}  By the above process we have taken a factorization $A=FV$ of $A$ where $V$ is coming from $\overline{V}$ by a set  of  permutations of the columns of $\overline{V}$. Also we have shown that  the rows $y_1,...y_k$  of $\overline{V}$ define a positive basis of a minimal lattice-subspace $\overline{Z}$  which contains the rows of $\overline{A}$ and that $\overline{Z}$ is the subspace generated by the rows of $A$.
 Since $V$ is coming from $\overline{V}$ by a set $\sigma$ of  permutations of the columns of $\overline{V}$ we have that the subspace $W$ of $\mathbf{R}^m$ generated by the rows of $V$ is a lattice-subspace. The proof is the following:\\
  Suppose that $S:\mathbf{R}^m\longrightarrow\mathbf{R}^m$ so that $S(x)$ is the vector of $\mathbf{R}^m$ arising by applying on the coordinated of $x$  the set of permutations $\sigma$. Then $S$ is linear, one-to-one and onto with the property: $x\in \mathbf{R}^m_+$ if and only if $S(x)\in \mathbf{R}^m_+$, i.e. $S$ and $S^{-1}$ are positive. This implies that  rows $b_i=S(y_i)$,  $i=1,...,k$  of $V$,  define a positive basis of $W$ and   $S(\overline{Z})=W$.\\
 Suppose that  $x,y\in W$. Then there exist a unique pair of vectors $\overline{x},\overline{y}\in\overline{Z}$ with  $x=S(\overline{x})$ and $y=S(\overline{y})$. Since $\overline{Z}$ is a lattice-subspace there exists $\overline{z}\in\overline{Z}$ which is the supremum of $\{\overline{x},\overline{y}\}$ in $\overline{Z}$, i.e. $\overline{z}$ is the minimum of all upper bounds of $\{\overline{x},\overline{y}\}$ which belong to $\overline{Z}$. If $z=S(\overline{z})$, because of the positivity of $S$ and $S^{-1}$ we have that   $z$  is the minimum of all upper bounds of $\{x,y\}$ which belong to $W$, therefore $z$ is the supremum of $\{x,y\}$ in $W$ and $W$ is a lattice-subspace. Also $\dim(W)=k$. Since $A=FV$ and by  the relation  (~\ref{fact}) we have that the rows of $A$ belong to $W$, therefore $W$ is the subspace of $\mathbf{R}^m$ generated by the rows of $A$, because $\dim(W)=k=\rank(A)$. This shows that the minimal lattice-subspace which contains the rows of $A$ is unique and equal to $W$.
 Suppose that   $A=\mathbb{F}\mathbb{V}$ is the factorization of $A$ of Theorem~\ref{1}. Then the rows of  $\mathbb{V}$ generate a minimal lattice-subspace $\mathbb{Z}$ which contains the rows of $A$, therefore $\mathbb{Z}=W$ because this minimal lattice-subspace is unique. So we have that $k=\rank(A)=\dim(\mathbb{Z})$  is the intermediate dimension of $\mathbb{F},\mathbb{V}$ and therefore  the factorization  $A=\mathbb{F}\mathbb{V}$ of Theorem~\ref{1} is a rank factorization of $A$.
  \end{proof}
 The next  is an example of the above process.
\begin{example}\label{exkal}\rm{ The matrix $A$ below is the one of  Example 7, Section 7 of  ~\cite{Kal}, where
an exact, symmetric nonnegative rank factorization of $A$, $A=WW^T$ is determined. Below we take a factorization of $A$ by two ways. In the first way, by a set of permutations we find a diagonal submatrix and in the second one we apply directly Theorem ~\ref{1}, but of course our factorization is not symmetric.\\
 $$A=\begin{bmatrix}13&15&12&0&10&14\\15&25&0&0&0&20\\12&0&37&4&30&9\\0&0&4&16&0&12\\10&0&30&0&25&5\\14&20&9&12&5&26\end{bmatrix}.$$
By permuting  third and forth row,   fourth and fifth row and   second and  third column of $A$   we take the matrix
$$\overline{A}=\begin{bmatrix}13&12&15&0&10&14\\15&0&25&0&0&20\\0&4&0&16&0&12\\10&30&0&0&25&5\\12&37&0&4&30&9\\14&9&20&12&5&26\end{bmatrix}.$$
  with a diagonal  submatrix $D_3$.
A basic set $\{y_1,y_2,y_3\}$ of the rows of $\overline{A}$ is consisting by the rows of the diagonal submatrix $D_3$ i.e. by the second, third and fourth row of   $\overline{A}$. We take the basic function $\beta$ of the vectors $y_i$.  The values of $\beta$  corresponding  to the  columns of  $D_3$ are the vertices of the simplex $\Delta$ of $\mathbf{R}^3_+$. Indeed, we have   $\beta(3)=(1,0,0)$, $\beta(4)=(0,1,0)$ and $\beta(5)=(0,0,1)$ and  any other value of $\beta$ is on $\Delta$. So the convex polytope $K$ generated by $R(\beta)$ is the simplex $\Delta$ and   a positive basis $\{b_1,b_2,b_3\}$ of a minimal lattice-subspace $Z$ which contains the vectors $y_i$ is given by the formula
$(b_1,b_2,b_3)=L^{-1}(y_1,y_2,y_3)^T$ where $L$ is the matrix with  columns the vertices of $K$, hence $L$ is the identical matrix  $I_3$. So we have that $\{y_1,y_2,y_3\}$ is
 a positive basis of the minimal lattice-subspace $\overline{Z}$ which contains the vectors $y_i$ and therefore also the rows of $\overline{A}$ and     $\{i_1=3,i_2=4,i_4=5\}$ is a set of nodes of the basis.
Therefore $\overline{A}=\overline{F}\;\overline{V}$, where $\overline{F}$, according to Theorem~\ref{1}  is the matrix consisting by the third, fourth and fifth column of $\overline{A}$ multiplied by $\frac{1}{25}$, $\frac{1}{16}$ and $\frac{1}{25}$, respectively and $\overline{V}$ the matrix with rows the vectors $y_1,y_2,y_3$. By the inverse process  we permute   second  and third column  of $\overline{V}$ and we permute fourth and fifth  and  third  and  fourth row of $\overline{F}$. So we  find that  $F=\begin{bmatrix}3/5&0&2/5\\1&0&0\\0&1/4&6/5\\0&1&0\\0&0&1\\4/5&3/4&1/5\end{bmatrix}$,
$V=\begin{bmatrix}15&25&0&0&0&20\\0&0&4&16&0&12\\10&0&30&0&25&5\end{bmatrix}$  and we have $FV=A$.

 Following Theorem~\ref{1}  we find a  rank factorization of $A$ as follows:
We find that $\{y_1=a_1,y_2=a_2,y_3=a_3\}$ is a basic set of the rows of  $A$ and
$$X=\begin{bmatrix}13&15&12&0&10&14\\15&25&0&0&0&20\\12&0&37&4&30&9\end{bmatrix},$$ is the matrix of the vectors $y_i$ and
 $y=(40,    40,    49,     4,    40,    43)$ is the sum of these vectors. By our matlab program we find that
$$G =\begin{bmatrix}
    0.3250&    0.3750&    0.2449&        0.0000&  0.2500&    0.3256\\
    0.3750&    0.6250&    0.0000&        0.0000&        0.0000&    0.4651\\
    0.3000&    0.0000&    0.7551&    1.0000&    0.7500&    0.2093\end{bmatrix},$$
is the matrix with columns the values $\beta(i)$ of $\beta$ and that

$$K1=\begin{bmatrix} 0.0000&   0.2500&    0.3750\\
         0.0000&       0.0000&   0.6250\\
    1.0000&    0.7500&         0.0000\end{bmatrix}$$
is the matrix with columns the vertices of the convex polytope $K$ generated by the values of $\beta$.
    Note that $d=r=3$ therefore the subspace $Z$ generated by the rows of $A$, is the minimal lattice-subspace which contains the vectors $y_1,y_2,y_3$ and the rows of the matrix $U=L^{-1}X$ are the vectors of a positive  basis of $Z$, where $L$ is the matrix with columns the vertices of $K$, i.e. $L=K1$.
    We find  that
    $$U= \begin{bmatrix}
     0&    0& 1&4& 0& 3\\
    16&0&48&0&40&8\\
    24&40&0&0&0&32\end{bmatrix},$$
therefore $\{i_1=4,i_2=5,i_3=2\}$ is a set of nodes of the basis. Therefore $A=FV$, where
the first column  of $F$ is the fourth column of $A$ multiplied by $\frac{1}{4}$, the second  column  of $F$ is the fifth column  of $A$ multiplied by $\frac{1}{40}$ and third column  of $F$ is the second column  of $A$ multiplied by $\frac{1}{40}$ and $V=U$.  Therefore
$$F=\begin{bmatrix}0&1/4&3/8\\0&0&5/8\\1&3/4&0\\4&0&0\\0&5/8&0\\3&1/8&1/2\end{bmatrix},$$
 and  it is easy to check that
$A=FV$.

}
\end{example}

\section{Appendix}

\subsection{Lattice-subspaces and positive bases in $\mathbb{R}^m $}

We present here the  basic mathematical notions and results of
~\cite{POLY96} and ~\cite{POLY99} which are needed for this
article. In these articles finite dimensional lattice-subspaces or
equivalently, finite dimensional ordered subspaces with positive
bases of the space $E=C(\Omega) $ of the real valued functions
defined on a compact Hausdorff topological space $\Omega$ are
studied.
For compatibility with our article we present these results in the case where
$\Omega=\{1,2,...,m\}$ and   $$E=\mathbb{R}^m
=\{x=(x(i))\;|\;\text{with}\;x(i)\in \mathbb{R},\;\text{for
any}\;i=1,2,...,m \}.$$
 The space  $\mathbb{R}^m $ is ordered by the pointwise ordering i.e. for any
$x, y\in \mathbb{R}^m $ we have $x\geq y$ if and only if $x(i)\geq
y(i)$ for each $i$. Then  $$\mathbb{R}^m_+=\{x\in
\mathbb{R}^m\big |x(i)\geq 0\;\text{for each} \; i\},$$ is
the positive cone of $\mathbb{R}^m$. For any $x,y\in \mathbb{R}^m$, $x\vee y=z$ where
$z(i)=x(i)\vee y(i)$ is the supremum of $\{x,y\}$ and $x\wedge
y=w$ where $w(i)=x(i)\wedge y(i)$ is the infimum of $\{x,y\}$. For real numbers $a,b$, $a\vee b$ is the maximum and $a\wedge
b$ is the minimum of $\{a,b\}$.
 Any subspace $X$  of $\mathbb{R}^m$, ordered by
the induced ordering, is an {\it ordered subspace} of
$\mathbb{R}^m $. Then $X_+=X \cap \mathbb{R}^m_+ $ is  the
positive cone of $X$ and for any $x,y\in X$ we have $x\geq
y\Longleftrightarrow x-y\in X_+$.\\
Suppose that $X$ is an ordered subspace of  $\mathbb{R}^m$. $X$
  is a {\it sublattice} or a Riesz subspace of $\mathbb{R}^m$ if for every $x,y \in X$, $x
\vee y$ and $ x \wedge y$ belong to  $X$.\\
Suppose that
$x,y\in X$. If a vector $z\in X$ exists so that $z$ is the minimum
of the upper bounds of $\{x,y\}$ which belong to $X$, then $z$ is
the supremum of  $\{x,y\}$ in $X$ and we write $z=\sup_X\{x,y\}$.
 Similarly  the maximum  of the lower
bounds of $\{x,y\}$ in $X$ (if exists) is the infimum,
$\inf_{X}\{x,y\}$, of $\{x,y\}$ in $X$. If for any
  $x,y\in X$ the supremum $\sup_X\{x,y\}$ and
 the infimum $\inf_X\{x,y\}$ of $\{x,y\}$ in
$X$ exist,  we say that
 $X$ is a {\it lattice-subspace} of $\mathbb{R}^m$.
  Then we have
$$\inf_X\{x,y\}\leq x \wedge y \leq x \vee y\leq \sup_X\{x,y\}.$$
Any sublattice is a lattice-subspace but the converse is not true.
  The set  $\{b_1,b_2,...,b_r\}$ is a {\it positive
basis} of $X$, if $\{b_1,b_2,...,b_r\}$ is a basis of $X$ and $$X_+=\{x= \sum_{i=1}^r
\lambda_i b_i
  \mid \lambda_i \geq 0 \text{ for each $i$}\},$$ i.e. the positive cone of $X$ is the set of vectors of $X$
  with nonnegative coordinates in the basis $\{b_1,b_2,...,b_r\}$. Then
  for any $x=\sum_{i=1}^r \lambda_i b_i \in X$ we have $$x\geq 0\Longleftrightarrow\lambda_i \geq 0,\text{ for any}\; i.$$
 Although $X$ has infinitely many bases, the existence
of a positive basis of $X$ is not always ensured.  The next result
see in ~\cite{POLY96},  identifies    the class of ordered
subspaces of $\mathbb{R}^m$ with a positive basis with the class
of the lattice-subspaces of $\mathbb{R}^m$. This result is in fact
the Choquet-Kendall theorem for finite-dimensional ordered
subspaces but there is also a more elementary proof based on the
theory of ordered spaces.

\begin{theorem} An ordered subspace $X$ of $\mathbb{R}^m$ is a lattice-subspace of
$\mathbb{R}^m$ if  and only if $X$  has a
positive basis.
\end{theorem}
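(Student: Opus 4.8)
The plan is to prove the two implications separately: the ``if'' direction is routine, while the ``only if'' direction (the Choquet--Kendall statement) carries the real content.

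For the ``if'' direction, suppose $\{b_1,b_2,\ldots,b_r\}$ is a positive basis of $X$, and take $x=\sum_{i=1}^r\lambda_ib_i$ and $y=\sum_{i=1}^r\mu_ib_i$ in $X$. I claim $w=\sum_{i=1}^r(\lambda_i\vee\mu_i)b_i$ equals $\sup_X\{x,y\}$. Indeed $w-x$ and $w-y$ have nonnegative coordinates in the positive basis, hence lie in $X_+$, so $w$ is an upper bound of $\{x,y\}$ in $X$; and if $z=\sum_{i=1}^r\nu_ib_i\in X$ satisfies $z\ge x$ and $z\ge y$, then $z-x,z-y\in X_+$ forces $\nu_i\ge\lambda_i$ and $\nu_i\ge\mu_i$, i.e. $\nu_i\ge\lambda_i\vee\mu_i$ for every $i$, so $z\ge w$. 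Symmetrically $\sum_{i=1}^r(\lambda_i\wedge\mu_i)b_i=\inf_X\{x,y\}$, so $X$ is a lattice-subspace.

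For the ``only if'' direction, assume $X$ is a lattice-subspace with $\dim X=r$. Since $\mathbb{R}^m$ with the pointwise order is Archimedean and $X$ carries the induced order, $(X,X_+,\sup_X,\inf_X)$ is an $r$-dimensional Archimedean Riesz space; in particular $X_+$ is a pointed (it sits inside the pointed cone $\mathbb{R}^m_+$) closed convex cone, and it is generating because $x=x^+-x^-$ with $x^+=\sup_X\{x,0\}$ and $x^-=\sup_X\{-x,0\}$ in $X_+$. The core of the argument is to show $X_+$ is simplicial. First, distinct extreme rays of $X_+$ are disjoint: an extreme ray $\mathbb{R}_+b$ is a face of $X_+$, so $0\le u\le b$ forces $u\in\mathbb{R}_+b$; applying this with $u=\inf_X\{b,c\}\in[0,b]$ for $c$ on another extreme ray gives $\inf_X\{b,c\}=\alpha b=\gamma c$, and $\alpha>0$ would make the two rays coincide, so $\inf_X\{b,c\}=0$. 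Second, finitely many pairwise disjoint nonzero elements of $X_+$ are linearly independent: splitting a vanishing combination $\sum_i\gamma_ib_i=0$ into positive and negative parts yields a vector $v\in X_+$ equal both to $\sum_{\gamma_i>0}\gamma_ib_i$ and to $\sum_{\gamma_j<0}(-\gamma_j)b_j$; by the standard Riesz identities these two sums are disjoint, so $v=\inf_X\{v,v\}=0$, and pointedness of $X_+$ then forces every $\gamma_i=0$. Hence $X_+$ has at most $r$ extreme rays; picking generators $b_1,\ldots,b_k$ of them ($k\le r$), the representation of a pointed closed cone in finite dimensions as the convex hull of its (here finitely many) extreme rays gives $X_+=\{\sum_{i=1}^k\lambda_ib_i:\lambda_i\ge0\}$, so $X=X_+-X_+=[b_1,\ldots,b_k]$, forcing $k=r$; thus $\{b_1,\ldots,b_r\}$ is a positive basis.

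The main obstacle is the forward direction, specifically the step where the lattice hypothesis is actually consumed: bounding the number of extreme rays of $X_+$ by $\dim X$. The geometric inputs --- that an extreme ray is a face, and that a pointed closed cone in finite dimensions is recovered from its extreme rays (Minkowski/Klee) --- are standard, as are the Riesz-space identities for disjoint elements; the decisive point is the pair of observations that in a vector lattice distinct extreme rays are mutually disjoint and that disjoint positive vectors are linearly independent, which together collapse a potentially large set of extreme rays down to exactly $r$ independent directions. One could instead quote the classical Choquet--Kendall theorem, but the argument above keeps the proof self-contained.
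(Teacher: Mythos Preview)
Your proof is correct. The ``if'' direction is routine, and in the ``only if'' direction the key steps --- that distinct extreme rays of $X_+$ are lattice-disjoint (via the face property of extreme rays), that pairwise disjoint positive elements are linearly independent (via Riesz decomposition in the vector lattice $(X,\sup_X,\inf_X)$), and that the closed pointed generating cone $X_+\subseteq\mathbb{R}^m$ is the conical hull of its extreme rays --- are all valid and combine as you indicate to give exactly $r$ extreme rays and hence a positive basis.

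As for comparison: the paper does not actually prove this theorem. It is stated in the Appendix, identified as the finite-dimensional Choquet--Kendall theorem, and referred to \cite{POLY96}, with the remark that ``there is also a more elementary proof based on the theory of ordered spaces.'' Your argument is precisely such an elementary, self-contained proof, so rather than differing from the paper's approach, it supplies what the paper chose to cite rather than include.
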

Suppose that   $A$ is a nonempty subset of $\mathbb{R}^m_+$. The
intersection $S(A)$ of all sublattices of $\mathbb{R}^m$ which
contain $A$ is again a sublattice of $\mathbb{R}^m$. Then $S(A)$ is the smallest (the minimum) sublattice of $\mathbb{R}^m$ which contains $A$  and is referred  as  {\it the sublattice} of $\mathbb{R}^m$ {\it generated
by $A$}.
 In the case of lattice-subspaces the intersection of all lattice-subspaces which contain $A$ is not necessarily a lattice-subspace. The reason is that in  lattice-subspaces,  the lattice operations  are not the induced ones by  $\mathbb{R}^m$ but are depenting on the subspaces.  So in ~\cite{POLY99} the notion of the  minimal lattice-subspace is defined as follows:  $Y$ is  {\it  minimal lattice-subspace} of $\mathbb{R}^m$ which contains $A$ if $Y$ is a lattice-subspace of $\mathbb{R}^m$, $A\subseteq Y$ and  does not exist a proper subspace of $W$ of $Y$ which is a lattice-subspace of $\mathbb{R}^m_+$  which contains $A$.
 As it is shown in ~\cite{POLY99}, Example 3.21,  even in the case of $\mathbb{R}^m$,  a minimal lattice-subspace which contains $A$ is not necessarily unique and also that such a subspace  is not necessarily contained in the sublattice of $\mathbb{R}^m$ generated by $A$.

 In ~\cite{POLY96} and ~\cite{POLY99} it is supposed that  $z_1,z_2,...,z_r$  are  fixed, linearly
independent, positive vectors of $C(\Omega)$ and
$X=[z_1,z_2,...,z_r]$ is the subspace of $C(\Omega)$ generated by
these vectors and the next problems are studied:

 $(i)$ Is $X$ is a lattice-subspace or a
sublattice  of  $C(\Omega)$?

$(ii)$ Does   a finite-dimensional minimal lattice-subspace $Z$ of $C(\Omega)$ which contains $X$ exist?  Determine $Z$ whenever  exists.

In all these cases the problem is connected with the determination
of a positive basis of the ordered subspaces $X$ or $Z$ and  the results and also the determination of a positive
basis   are based on the study of the generating vectors $z_1,z_2,...,z_r$.
The next function has been defined in ~\cite{POLY96} and is
crucial in  this theory. The function
$$ \beta(i)=\Bigl(\frac{z_1(i)}{z(i)},\frac{z_2(i)}{z(i)},...,
\frac{z_r(i)}{z(i)}\Bigr),\;\text{for each}\;\;
i=1,2,...,m\;\text{with\;} z(i)>0,$$  where $z=z_1+z_2+...+z_r$,
is  the {\it  basic function }\footnote{In ~\cite{POLY96}, the basic function $\beta$ is referred as the basic curve.} of $z_1,z_2,...,z_r$.
The set
 $$R(\beta)= \{\beta(i)\;|\;i=1,2,...,m\;\text{with\;} z(i)>0\},$$ is the range of $ \beta$ where $R(\beta)$, as a set, is consisting by mutually different vectors. The next Lemma is obvious because the rank of the matrix $M$ with rows the vectors $z_i$ is equal to $r$  and   the values of $\beta$ are positive multiples of the columns of $M$.
 \begin{lemma}\label{l1}The rank of the matrix with columns the vectors of $R(\beta)$ is equal to $r$.
  \end{lemma}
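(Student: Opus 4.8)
The plan is to reduce the statement to the elementary fact that multiplying a vector by a positive scalar affects neither whether it is nonzero nor the linear (in)dependence of a family to which it belongs. Let $M$ be the $r\times m$ matrix whose rows are $z_1,z_2,\dots,z_r$. By the definition of $\beta$, for every index $i$ with $z(i)>0$ the vector $\beta(i)$ equals $\frac{1}{z(i)}$ times the $i$-th column $M^i$ of $M$; thus $\beta(i)$ and $M^i$ are positive multiples of one another. For the remaining indices, $z(i)=0$ forces $z_1(i)=\dots=z_r(i)=0$ since the $z_k$ are nonnegative, so $M^i$ is the zero column and contributes nothing to the column space of $M$.

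Because $z_1,\dots,z_r$ are linearly independent, $\rank(M)=r$, and since $M$ has only $r$ rows its column space is all of $\mathbb{R}^r$. Hence among the nonzero columns of $M$ — equivalently, among the columns $M^i$ with $z(i)>0$ — one can choose $r$ linearly independent ones, say $M^{i_1},\dots,M^{i_r}$. Each $M^{i_t}$ is a positive multiple of $\beta(i_t)$, so $\beta(i_1),\dots,\beta(i_r)$ are again linearly independent; moreover they are pairwise distinct, for if $\beta(i_s)=\beta(i_t)$ with $s\neq t$ then $M^{i_s}$ and $M^{i_t}$ would be parallel and hence linearly dependent. Therefore the set $R(\beta)$ contains $r$ linearly independent vectors, so the matrix whose columns are the vectors of $R(\beta)$ has rank at least $r$. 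Since all these vectors lie in $\mathbb{R}^r$, the rank is at most $r$, and equality follows.

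There is essentially no obstacle here: the only points deserving a word of care are the treatment of the indices excluded from the domain of $\beta$ (handled by nonnegativity of the $z_k$, which makes the corresponding columns of $M$ vanish) and the observation that passing from the indexed family $(\beta(i))_i$ to the set $R(\beta)$ of its distinct values does not destroy a chosen linearly independent subfamily — which holds precisely because a linearly independent family cannot contain two proportional, and in particular two equal, vectors.
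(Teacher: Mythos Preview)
Your proof is correct and follows exactly the approach the paper indicates: the paper simply states that the lemma is obvious because the rank of the matrix $M$ with rows $z_1,\dots,z_r$ equals $r$ and the values of $\beta$ are positive multiples of the columns of $M$. You have written out this one-line justification in full detail, including the careful handling of indices with $z(i)=0$ and of the passage from the indexed family to the set $R(\beta)$, but the underlying idea is identical.
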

Denote by  $K$ be the convex polytope  of $\mathbb{R}^m$ generated by the finite set  $R(\beta)$ (the convex hull of $R(\beta)$) and suppose that $P_1,P_2,...,P_d$ are the vertices (extreme points) of $K$. Since $R(\beta)$ is finite, the vertices of $K$ are vectors of $R(\beta)$, therefore any $P_k$ is the image of an index $i_k$, i.e. $P_k=\beta(i_k)$. We will denote below by $(a_1,a_2,...,a_k)^T,$ where $a_1,a_2,...,a_k$ are vectors of $\mathbb{R}^m$,  the $k\times m$  matrix with rows the vectors $a_i$. Also in the next results,  $z_1,z_2,...,z_r$ are linearly independent, positive vectors of $\mathbb{R}^m$, $X=[z_1,...,z_r]$,   $\beta$ is the basic function of the vectors $z_i$ and $K$ the convex polytope generated by  $R(\beta)$.

\begin{theorem}[~\cite{POLY99}, Theorem 3.6]\label{Prop3}
The subspace $X$ of $\mathbb{R}^m$ generated by the vectors
$z_1,z_2,...,z_r$,  is a sublattice of $\mathbb{R}^m$ if
and only if $R(\beta)$ has exactly $r$ elements.

If  $R(\beta)= \{P_1,P_2,\ldots,P_r\}$, then a  positive basis
$\{b_1,b_2,...,b_r\}$ of $X$ is given by the formula:
\begin{equation}\label{b1}
(b_1,b_2,...,b_r)^T=L^{-1}(z_1,z_2,...,z_r)^T,
\end{equation}
where $L$ is  the $r\times r$ matrix with  columns the vectors
$P_1,P_2,...,P_r$.

\end{theorem}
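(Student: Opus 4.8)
The plan is to prove the two implications separately and to read off the positive-basis formula from the ``if'' direction, where it appears constructively.

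First I would assume that $R(\beta)=\{P_1,\dots,P_r\}$ has exactly $r$ elements and deduce that $X$ is a sublattice. By Lemma~\ref{l1} these $r$ vectors are linearly independent, so the $r\times r$ matrix $L$ with columns $P_1,\dots,P_r$ is invertible and the vectors $b_1,\dots,b_r$ defined by~(\ref{b1}) form a basis of $X$. The crux is a pointwise computation: for an index $i$ with $z(i)>0$ there is a unique $k$ with $\beta(i)=P_k$, and taking the $i$-th coordinate in~(\ref{b1}) gives $(b_1(i),\dots,b_r(i))^T=L^{-1}(z_1(i),\dots,z_r(i))^T=z(i)\,L^{-1}P_k^T=z(i)\,e_k$, since $Le_k=P_k^T$; for an index with $z(i)=0$ all the $z_j(i)$, hence all the $b_j(i)$, vanish. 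Therefore each $b_j$ is nonnegative, the $b_j$ have pairwise disjoint supports, and each $b_j$ is nonzero (it takes the value $z(i)>0$ at any $i$ with $\beta(i)=P_j$, where moreover $b_l(i)=0$ for $l\ne j$). Pairwise disjoint supports make the lattice operations of $\mathbb{R}^m$ act coordinatewise on $X$: for $x=\sum_j\lambda_jb_j$ and $y=\sum_j\mu_jb_j$ one gets $x\vee y=\sum_j(\lambda_j\vee\mu_j)b_j\in X$ and $x\wedge y=\sum_j(\lambda_j\wedge\mu_j)b_j\in X$, so $X$ is a sublattice; and evaluating $x$ at an index $i$ with $\beta(i)=P_j$ shows that $x\ge0$ forces $\lambda_j\ge0$, so $\{b_1,\dots,b_r\}$ is a positive basis. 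This simultaneously proves the displayed formula.

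For the converse I would assume $X$ is a sublattice; then $X$ is a lattice-subspace, hence has a positive basis $\{c_1,\dots,c_r\}$ by the characterization of lattice-subspaces as the ordered subspaces admitting a positive basis. The first ingredient is that such a basis is ``disjoint'': for $i\ne j$ the vector $c_i\wedge c_j$ lies in $X$, so $c_i\wedge c_j=\sum_k\lambda_kc_k$; positivity gives $\lambda_k\ge0$, while $0\le c_i-c_i\wedge c_j=(1-\lambda_i)c_i-\sum_{k\ne i}\lambda_kc_k$ forces $\lambda_k\le0$ for $k\ne i$, and symmetrically $0\le c_j-c_i\wedge c_j$ forces $\lambda_k\le0$ for $k\ne j$; hence $\lambda_k=0$ for every $k$ and $c_i\wedge c_j=0$ in $\mathbb{R}^m$, i.e.\ the $c_j$ have pairwise disjoint supports. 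The second ingredient is a basis-free reading of $\beta$: because $z_1,\dots,z_r$ is a basis of $X$, the map $\phi\mapsto(\phi(z_1),\dots,\phi(z_r))$ is a linear isomorphism of $X^{*}$ onto $\mathbb{R}^r$, and $\beta(i)$ is the $\ell_1$-normalization of the image of the evaluation functional $\delta_i:x\mapsto x(i)$ restricted to $X$; hence $\beta(i)=\beta(i')$ exactly when $\delta_i|_X$ and $\delta_{i'}|_X$ are positive multiples of one another. Finally, if $z(i)>0$ then $z=\sum_jc_j\in X_+$ gives $c_j(i)>0$ for exactly one $j$, and $x(i)=\lambda_jc_j(i)$ for every $x=\sum_l\lambda_lc_l\in X$, so $\delta_i|_X$ is a positive multiple of the $j$-th coordinate functional of the basis $\{c_l\}$. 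Since there are only $r$ such coordinate functionals, $R(\beta)$ has at most $r$ elements, and at least $r$ by Lemma~\ref{l1}, hence exactly $r$.

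The coordinatewise verifications are routine; the step I expect to need the most care is the converse, namely the ``disjointness'' of a positive basis of a sublattice and the observation that coincidence of two values $\beta(i),\beta(i')$ is intrinsic to $X$, which is what makes the cardinality of $R(\beta)$ independent of the chosen spanning vectors.
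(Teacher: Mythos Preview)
Your argument is correct. Note that the paper does not supply its own proof of this theorem: it is quoted in the Appendix from \cite{POLY99} (Theorem~3.6 there), so there is nothing to compare your approach against within the present paper. Your ``if'' direction---computing $(b_1(i),\dots,b_r(i))^T=z(i)L^{-1}P_k^T=z(i)e_k$ at each index $i$ to obtain a disjointly supported positive basis---is exactly the mechanism behind the formula, and your converse via disjointness of a positive basis of a sublattice and the identification of $\beta(i)$ with (the normalized image of) a coordinate functional is the natural route.

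One small slip: in the converse you write ``$z=\sum_j c_j$'', but $z=z_1+\cdots+z_r$ need not equal $\sum_j c_j$. What you actually need (and implicitly use) is that $z=\sum_j\alpha_j c_j$ with all $\alpha_j>0$; this follows because the change-of-basis matrix from $\{c_j\}$ to $\{z_l\}$ is nonnegative and invertible, so no column sum $\alpha_j=\sum_l M_{lj}$ can vanish. With this in hand, $z(i)>0$ forces $c_j(i)>0$ for the unique $j$ supported at $i$, and the rest of your counting goes through. This is a wording issue, not a gap.
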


\begin{theorem}[~\cite{POLY96}, Theorem 3.6]\label{Prop4}
The subspace $X$ of $\mathbb{R}^m$ generated by the vectors
$z_1,z_2,...,z_r$,  is a lattice-subspace  of $\mathbb{R}^m$ if
and only if  $K$  is a polytope with $r$ vertices.

Then  a  positive basis
$\{b_1,b_2,...,b_r\}$ of $X$ is given by the formula:
\begin{equation}\label{b1}
(b_1,b_2,...,b_r)^T=L^{-1}(z_1,z_2,...,z_r)^T,
\end{equation}
where $L$ is  the $r\times r$ matrix with  columns the vertices
$P_1,P_2,...,P_r$ of $K$.
\end{theorem}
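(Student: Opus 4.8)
The plan is to reduce the whole statement to the single equivalence ``$X$ is a lattice-subspace $\iff d=r$'' (equivalently, $K$ is a simplex), together with the basis formula in that case. First I would record that $d\ge r$ always: by Lemma~\ref{l1} the finite set $R(\beta)$ contains $r$ linearly independent vectors, and since $R(\beta)$ lies in the simplex $\Delta=\{x\in\mathbb{R}^r_+:\sum_l x_l=1\}$, the polytope $K$ is $(r-1)$-dimensional, and an $(r-1)$-dimensional polytope has at least $r$ vertices.

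The engine of the argument is to transport the order structure of $X$ to the space of coefficients. Let $\phi:\mathbb{R}^r\to X$ be the linear isomorphism $\phi(c)=\sum_{l=1}^r c_l z_l$. Since $z_1,\dots,z_r\in\mathbb{R}^m_+$ form a basis of $X$, the cone $X_+=X\cap\mathbb{R}^m_+$ is pointed and spans $X$, hence $\mathcal C:=\phi^{-1}(X_+)$ is a pointed, full-dimensional polyhedral cone in $\mathbb{R}^r$. The key point is that $\mathcal C$ is cut out by the vertices of $K$: for an index $i$ with $z(i)>0$ one has, using $z_l(i)=z(i)\beta_l(i)$, that $\phi(c)(i)=z(i)\langle c,\beta(i)\rangle$, while $\phi(c)(i)=0$ automatically when $z(i)=0$; since every $\beta(i)$ is a convex combination of $P_1,\dots,P_d$ and every $P_k$ is some value $\beta(i_k)$, the system ``$\phi(c)(i)\ge0$ for all $i$'' is equivalent to $\langle c,P_k\rangle\ge0$ for $k=1,\dots,d$. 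Hence $\mathcal C=\bigl(\operatorname{cone}(P_1,\dots,P_d)\bigr)^{*}$, and taking duals (the cone $\operatorname{cone}(P_1,\dots,P_d)$ is finitely generated, hence closed) gives $\mathcal C^{*}=\operatorname{cone}(P_1,\dots,P_d)$; since all $P_k$ lie on the hyperplane $\sum_l x_l=1$ this cone is pointed and its cross-section by that hyperplane is exactly $K$, so its extreme rays are precisely the $d$ rays spanned by the vertices of $K$.

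Now I would finish both directions. For $\Leftarrow$: if $d=r$ the vertices $P_1,\dots,P_r$ are affinely independent, hence (lying on $\sum_l x_l=1$) linearly independent, so the matrix $L$ with columns $P_1,\dots,P_r$ is invertible; putting $(b_1,\dots,b_r)^T=L^{-1}(z_1,\dots,z_r)^T$ gives $z_l=\sum_k (P_k)_l b_k$, so for each $i$ the vector $(z_1(i),\dots,z_r(i))^T$ equals both $\sum_k b_k(i)P_k$ and $z(i)\sum_k \mu_{ik}P_k$, where $\sum_k\mu_{ik}P_k$ is the (unique, since $K$ is a simplex) barycentric expansion of $\beta(i)$. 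Linear independence of the $P_k$ gives $b_k(i)=z(i)\mu_{ik}\ge0$, so each $b_k$ is positive; moreover a vertex $P_k$ is some $\beta(i_k)$, whence $b_k(i_k)=z(i_k)>0$ and $b_j(i_k)=0$ for $j\ne k$. Writing an arbitrary $x=\sum_l c_l z_l\in X$ as $x=\sum_k\langle c,P_k\rangle b_k$, the coefficient $\lambda_k=\langle c,P_k\rangle$ is recovered from $x(i_k)=\lambda_k z(i_k)$, so $x\ge0$ forces all $\lambda_k\ge0$, while conversely $\lambda_k\ge0$ gives $x\ge0$ since the $b_k$ are positive; hence $\{b_1,\dots,b_r\}$ is a positive basis of $X$, which proves the formula and, by the characterization of lattice-subspaces through the existence of a positive basis, that $X$ is a lattice-subspace. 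For $\Rightarrow$: if $X$ is a lattice-subspace it admits a positive basis, so $X_+$ is a simplicial cone, hence so is $\mathcal C$ (a linear image of it), hence so is its dual $\mathcal C^{*}$; but a simplicial cone in $\mathbb{R}^r$ has exactly $r$ extreme rays, while $\mathcal C^{*}=\operatorname{cone}(P_1,\dots,P_d)$ has $d$ of them, so $d=r$.

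The step I expect to be the real obstacle is the second paragraph, the convex-duality bookkeeping: one must check carefully that the a priori $m$ separate positivity conditions on $\phi(c)$ reduce to the finitely many inequalities indexed by the vertices of $K$, and then invoke, in the right direction, the standard facts that for a pointed full-dimensional polyhedral cone in $\mathbb{R}^r$ passing to the dual exchanges extreme rays and facet inequalities (so $\mathcal C^{*}$ has exactly $d$ extreme rays) and that simpliciality is preserved under this duality. Once that dictionary is in place the remaining ingredients — the bound $d\ge r$, the barycentric computation yielding positivity of the $b_k$, and the node identity $x(i_k)=\lambda_k z(i_k)$ — are routine; in fact $\Leftarrow$ can alternatively be obtained from the explicit construction above alone (using uniqueness of barycentric coordinates in a simplex), in parallel with the sublattice case, Theorem~\ref{Prop3}.
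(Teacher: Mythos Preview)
The paper does not actually supply a proof of this theorem: it is quoted in the Appendix as Theorem~3.6 of \cite{POLY96} and used as a black box throughout. So there is no ``paper's own proof'' to compare against.

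Your argument is correct. The identification $\mathcal C=\bigl(\operatorname{cone}(P_1,\dots,P_d)\bigr)^{*}$ via $\phi(c)(i)=z(i)\langle c,\beta(i)\rangle$ is the right structural observation, and the reduction of the $m$ positivity constraints to the $d$ constraints indexed by the vertices is exactly the step where the polytope $K$ enters; you justify it cleanly (each $\beta(i)$ is a convex combination of the $P_k$, and each $P_k$ is attained as some $\beta(i_k)$). Biduality then gives $\mathcal C^{*}=\operatorname{cone}(P_1,\dots,P_d)$, and since the generators lie on the affine hyperplane $\sum_l x_l=1$ the extreme rays of this cone are in bijection with the vertices of $K$. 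The $\Rightarrow$ direction then follows from the standard fact that duality preserves simpliciality for full-dimensional pointed polyhedral cones. For $\Leftarrow$ your barycentric computation $b_k(i)=z(i)\mu_{ik}$, together with the node identity $b_k(i_k)=z(i_k)$, $b_j(i_k)=0$ for $j\neq k$, gives both positivity of the $b_k$ and the fact that the coefficients in the basis are determined by values at the nodes, so $\{b_1,\dots,b_r\}$ is a positive basis and the formula holds. The only cosmetic point: when you say ``$\mathcal C$ is polyhedral'' you are implicitly using that there are finitely many constraints, which is immediate here since $i$ ranges over $\{1,\dots,m\}$; it may be worth saying this explicitly, as it is precisely what fails in the general $C(\Omega)$ setting of \cite{POLY96}.
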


\begin{theorem}[~\cite{POLY99}, Theorem 3.10]\label{Prop5}

If  $K$ is a polytope with $d$ vertices $P_{1},P_{2},...,P_{d}$, then
a $d$-dimensional  minimal lattice-subspace $Z$ of $\mathbb{R}^m$ which contains the vectors $z_1,z_2,...,z_r$  is constructed as follows:
\begin{enumerate}
\item [(a)] Reenumerate   $R(\beta)$ so that its $r$ first vectors to
be linearly independent and we denote again by   $P_{1},P_{2},...,P_{d}$ the new enumeration.

\item [(b)] Define $d-r$ new  vectors $z_{r+k}$, $k=1,2,...,d-r$ following  the next steps:
First, for any $i=1,2,...m$, we expand  the vector $\beta(i)$ as  convex combination  of the vertices $P_1,P_2,...,P_d$ and suppose that $$\beta(i)=\sum_{j=1}^d\xi_j(i)P_j,$$ is such an expansion of $\beta(i)$.\footnote{Of course $\xi_j(i)\geq 0$ for any $j$ and $\sum_{j=1}^d\xi_j(i)=1.$}
In the sequel for  any $k=r+1,...,d$, we define  the vector $z_k$ of  $\mathbb{R}^m_+$ so that  $$z_{k}(i)=\xi_k(i)z(i),\;\text{for any}\;i=1,2,...,m,$$ where $z$ is the sum of the vectors $z_1,...,z_r$ and the subspace
$$ Z=[z_1,...,z_r,...,z_d]$$ generated by the vectors $z_1,...,z_r,...,z_d$  is a  minimal lattice-subspace  which contains   the vectors $z_1,...,z_r$.  A positive basis $\{b_1,b_2,...,b_d\}$ of $Z$ is is given by the formula:
$$(b_1,b_2,...,b_d)^T=L^{-1}(z_1,...,z_r,...,z_d)^T,$$
where $L$ is the matrix with columns the vertices $R_i$, $i=1,2,...,d$ where $R_1,R_2,...,R_d$ are the vertices of the convex polytope generated by the range $R(\gamma)$ of $\gamma$.\\
Especially the vectors $R_i$ are also given by the vectors $P_1,P_2,...P_d$ of the vertices of $K$ where the first $r$ of them are linearly independent as follows:
$R_i=(P_i,0)$ for any $i=1,2,...,r$ and $R_{r+i}=(P_{r+i},e_i)$ for any $i=1,2,...d-r$, where in the vectors $(P_i,0)$, $0$ is the zero vector of $\mathbb{R}^{d-r}$ and in the vectors $(P_i,e_k)$, $e_k$ is the $k$-component of the usual basis $\{e_1,...,e_{d-r}\}$ of $\mathbb{R}^{d-r}$.
\end{enumerate}

\end{theorem}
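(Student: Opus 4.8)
The plan is to verify the three assertions hidden in the construction of $Z=[z_1,\dots,z_r,z_{r+1},\dots,z_d]$: that $\dim Z=d$; that $Z$ is a lattice-subspace containing $X=[z_1,\dots,z_r]$ and carries the stated positive basis; and that $Z$ is a \emph{minimal} lattice-subspace containing $X$. The first two I would obtain by a direct computation with the basic function $\gamma$ of $z_1,\dots,z_d$, so as to apply Theorem~\ref{Prop4}; the third, which carries the real content, I would deduce from a dimension lower bound valid for \emph{every} lattice-subspace containing $X$.

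For the first part I would examine $\gamma$ at the indices $i_l$ with $P_l=\beta(i_l)$ (the vertices of $K$ lie in $R(\beta)$), after the reenumeration of (a) that makes $P_1,\dots,P_r$ linearly independent. Since each $P_l$ is a vertex of $K$, its convex representation $\beta(i_l)=\sum_j\xi_j(i_l)P_j$ is forced to be $\xi_l(i_l)=1$, $\xi_j(i_l)=0$ for $j\neq l$; feeding this into $z_{r+k}(i)=\xi_{r+k}(i)z(i)$ (note the new $z_{r+k}$ are nonnegative, as the $\xi_{r+k}$ are) and into $z'=z_1+\dots+z_d$ gives, after a short computation, $\gamma(i_l)=R_l$ for every $l$, where $R_i=(P_i,0)$ for $i\le r$ and $R_{r+i}=(P_{r+i},e_i)$, normalised onto the simplex of $\mathbb{R}^d$ — harmless, since a positive basis is unique only up to positive multiples. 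Conversely, expanding a general $\beta(i)=\sum_j\xi_j(i)P_j$ inside the same definitions exhibits $\gamma(i)$ as a convex combination of $R_1,\dots,R_d$, so $\co R(\gamma)=\co\{R_1,\dots,R_d\}$. Now $R_1,\dots,R_d$ are linearly independent — the $e_i$-blocks separate the last $d-r$ of them from one another and from the first $r$, which were chosen independent — so this polytope is a simplex with exactly $d$ vertices, and the relations $\gamma(i_l)=R_l$ together with the independence of the $R_l$ force the array $(z_1,\dots,z_d)^T$ to have rank $d$; hence $z_1,\dots,z_d$ are linearly independent and $\dim Z=d$. At this point Theorem~\ref{Prop4}, applied to the generating family $z_1,\dots,z_d$ of $Z$, has its hypothesis satisfied ($\co R(\gamma)$ has $d=\dim Z$ vertices), so $Z$ is a lattice-subspace and $(b_1,\dots,b_d)^T=L^{-1}(z_1,\dots,z_d)^T$, with $L$ the matrix of columns $R_1,\dots,R_d$, is a positive basis; this is simultaneously the explicit description of the $R_i$ claimed at the end of the statement.

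The hard part is minimality, and I would get it from the stronger statement that every lattice-subspace $Y$ of $\mathbb{R}^m$ with $X\subseteq Y$ has $\dim Y\ge d$; combined with $\dim Z=d$ and $X\subseteq Z$ this forces $Z$ to be a minimal lattice-subspace containing $z_1,\dots,z_r$. By finite-dimensionality I may assume $Y$ has least possible dimension $s$ among lattice-subspaces containing $X$. Such a $Y$ has a positive basis $\{c_1,\dots,c_s\}$ — furnished, together with a set of nodes $\{i_1,\dots,i_s\}$, by the formula of Theorem~\ref{Prop4} applied to that basis. Writing $z_j=\sum_t\mu_{jt}c_t$ with all $\mu_{jt}\ge 0$, no column of $(\mu_{jt})$ can vanish, for otherwise $X$ would lie in the span of the remaining $c_t$ — a lattice-subspace of smaller dimension still containing $X$. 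Evaluating $z_j=\sum_t\mu_{jt}c_t$ at the node $i_t$ gives $z_j(i_t)=\mu_{jt}c_t(i_t)$, so $\beta(i_t)$ equals the normalisation $\widehat\mu_t$ of the $t$-th column of $(\mu_{jt})$; evaluating at an arbitrary index and normalising exhibits $\beta(i)$ as a convex combination of $\widehat\mu_1,\dots,\widehat\mu_s$. Thus $R(\beta)\subseteq\co\{\widehat\mu_1,\dots,\widehat\mu_s\}$, while each $\widehat\mu_t=\beta(i_t)\in R(\beta)$, so $K=\co R(\beta)=\co\{\widehat\mu_1,\dots,\widehat\mu_s\}$. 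Since the vertices of a polytope belong to every finite set generating it as a convex hull, $\{P_1,\dots,P_d\}\subseteq\{\widehat\mu_1,\dots,\widehat\mu_s\}$, whence $d\le s=\dim Y$. I expect this to be the only genuine obstacle: everything rests on reducing to a $Y$ of least dimension (so that Theorem~\ref{Prop4} both furnishes the nodes and rules out zero columns) and on the two inclusions that identify $K$ with $\co\{\widehat\mu_1,\dots,\widehat\mu_s\}$; once that identity is in hand, $d\le s$, and therefore the minimality of $Z$, is immediate.
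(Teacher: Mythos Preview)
The paper does not supply a proof of this theorem: it is quoted in the Appendix from \cite{POLY99} as an external result, so there is nothing in the present paper to compare your argument against.

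That said, your proof is essentially correct and self-contained (using only Theorem~\ref{Prop4}), with one point that deserves tightening. In the minimality step you invoke, for the least-dimensional lattice-subspace $Y\supseteq X$, a positive basis $\{c_1,\dots,c_s\}$ \emph{together with a set of nodes}, ``by the formula of Theorem~\ref{Prop4} applied to that basis''. The existence of a positive basis is the Choquet--Kendall theorem; the existence of nodes is not part of the statement of Theorem~\ref{Prop4} and cannot be borrowed from Theorem~\ref{0}, whose proof in this paper already relies on Theorem~\ref{Prop5}. What you need is the short direct argument: apply Theorem~\ref{Prop4} with generators $c_1,\dots,c_s$, so that the basic function $\beta_c$ has a polytope with exactly $s$ vertices $Q_t=\beta_c(j_t)$; then the identity $L(b_1,\dots,b_s)^T=(c_1,\dots,c_s)^T$ evaluated at $j_t$ gives $Q_t=\sum_l \tfrac{b_l(j_t)}{c'(j_t)}Q_l$, forcing $b_t(j_t)>0$ and $b_l(j_t)=0$ for $l\neq t$ by independence of the $Q_l$. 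Since positive bases are unique up to scaling and permutation, the $c_t$ inherit nodes. This is exactly the $d=r$ case of the paper's proof of Theorem~\ref{0}, which uses only Theorem~\ref{Prop4} and is therefore legitimate here. With that clarification your minimality argument --- identifying $K=\co\{\widehat\mu_1,\dots,\widehat\mu_s\}$ via the two inclusions and reading off $d\le s$ --- goes through cleanly. Your handling of the normalisation discrepancy in the $R_{r+i}$ (the factor $\tfrac12$ visible in Example~\ref{ex1}) is also correct: it affects the positive basis only by positive scalars.
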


\subsection{The Matlab program}
We give below the Matlab program for the determination of the factors $F,V$ of a nonnegative matrix $A1$ .
The input is the initial matrix $A1$. By the function[A,y,index]=Zero(A1) we take the matrix $A$ which is $A1$ without zero columns,  the  sum  $y$  of the rows of $A1$ (in the places with $y(i)=0$, the matrix $A1$ has zero columns) and an index "index" with values 0 and 1.  index=1, if  we have deleted  zero columns from $A1$. The factors  $F,V$   and the test matrix $R=A1-FV$ are given by   function[F,V,R]. If $A1$ has zero columns it is displayed "the matrix has zero columns". Note  that a matlab program for the algorithm determining a positive basis of the lattice-subspace of Theorem~\ref{Prop5} has been also given  in ~\cite{POLY12} but the corresponding part of our matlab program of the factorization of $A$ is not the same, with some variations and improvements.

\begin{verbatim}
a1=min(size(A1))
[A,y,index]=Zero(A1)
[X1]=MaximalLinInd(A')
X=X1'
[N,M]=size(X)
if N==M
display('trivial factorization')
pause
end
z=X(1,:)
for i=2:N
    z=z+X(i,:)
end
for i=1:M
    G(:,i)=X (:,i)/z(i)
 end
L=unique(G','rows')
D=L'
[f,g]=size(D)
if f==2
    a=min(D(1,:))
b=max(D(1,:))
B=[a,b;1-a,1-b]
U=B\X
[F,V,R]=factors(U,A)
if index==1
 [F,V,R]=addzeros(A1,F,V,y)
    display('the matrix has zero columns')
end
  display('the rows define a two-dimensonal lattice-subspace')
   pause
end
%the case of sublattice
if g==N
U=D\X
 [F,V,R]=factors(U,A)
if index==1
 [F,V,R]=addzeros(A1,F,V,y)
    display('the matrix has zero columns')
end
display('Rank factorization, the rows of the matrix generate a sublattice')
pause
end
%the next part determines the vertices of the convex hull of columns of D
B=D';
utrans=bsxfun(@minus,B,B(1,:))
rot=orth(utrans')
uproj=utrans*rot
K=convhulln(uproj)
m=unique (K(:))
K1=B(m,:)'
%the columns of K1 are the vertices of the convex polytope
[N1,M1]=size(K1)
if M==M1
    display('trivial factorization')
    pause
end
if N==M1
    U=K1\X
 [F,V,R]=factors(U,A)
 if index==1
   [F,V,R]=addzeros(A1,F,V,y)
    display('the matrix has zero columns')
 end
 display('Rank factorization, the rows of the matrix generate  a lattice-subspace')
pause
end
if M1>=a1
    display('the intermediate factor is equal to M1')
    pause
end
[C,d]=Reordering(K1)
[H]=secondpartbeta(G,C)
[H2]=Hsecond(G,C,H)
[Y]=LastVectors(X,C,z,H2)
[U]=basismatrix(X,Y,C)
[F,V,R]=factors(U,A)
if index==1
 [F,V,R]=addzeros(A1,F,V,y)
    display('the matrix has zero columns')
end

%%%%%%%%%%%%%%%%%%%%%%%%%%%%%%%%%%%%%%%%%

function[A,y,index]=Zero(A1)
[N,M]=size(A1);
y=A1(1,:);
for i=2:N
    y=y+A1(i,:);
end
A=A1
index=0
r=0
for i=1:M
    if y(i)==0
        j=i-r
       A(:,j)=[];
       index=1
        r=r+1;
    end
 end
end
%%%%%%%%%%%%%%%%%%%%%%%%%%%%%%%%%%%%%%%%%%%%%%%%%%%%%%%%%%%%%%%%%%
% D is a matrix with columns
% a maximal set of linearly independent columns  of $X$.

function[D]=MaximalLinInd(X)
[N,M]=size(X);

c=min(size(X))
D=X(:,1)

r=2;
for i=2:M;
    D=[D X(:,i)]
    [n,m]=size(D)
    if rank(D)<r
        D(:,m)=[]

    else
        if c==min(size(D))
            display (D)
            break
        else
          r=r+1
        end
    end

%%%%%%%%%%%%%%%%%%%%%%%%%%%%%%%%%%%%%%%%%
% The  input E is a nonnegative real  matrix (for example E=A)
% and  U  a matrix  with rows the vectors of a
% positive basis of a minimal lattice-subspace Z which contains the rows of E.
% The output are the factors F,V  of E (E=FV) and the test matrix R=E-FV.

function[F,V,R]=factors(U,E)
 [n,m]=size(U)

for i=1:n
 for j=1:m
 if abs(U(i,j))<1e-6
     W(i,j)=0
     W1(i,j)=0
 else
     W(i,j)=U(i,j)
     W1(i,j)=1

end
end
end

B=eye(n)
for i=1:n
       for j=1:m
        if B(:,i)==W1(:,j)
          k(1,i)=j

        end
    end
end
for i=1:n
   ai=1/U(i,k(i))
F(:,i)=E(:,k(i))*ai
end
V=W
R=E-F*V
end

%%%%%%%%%%%%%%%%%%%%%%%%%%%%%%%%%%%%%%%%%%%%%

% The function reorders the columns of X in a new matrix C whose the d first
% columns are linearly  independent  where  d is the maximum
% number of linearly independent columns of X.

function[C,d]=Reordering(X)
[N,M]=size(X)
D=X(:,1)
W=X(:,1)
r=2;
for i=2:M
    D=[D X(:,i)]
    [n,m]=size(D)
    if rank(D)<r
        D(:,m)=[]
        W=[W X(:,i)]
    else
        r=r+1
    end
end
[d,e]=size(D)
W(:,1)=[]
C=D
[a,b]=size(W)
for i=1:b
    C=[C, W(:,i)]
end
end

%%%%%%%%%%%%%%%%%%%%%%%%%%%%%%%%%%%%%%%%%%%%%%%%%%%%%%%%
% In the input, G is a NxM matrix and D a Nxm matrix with m\leq M and
% the output H is a NxM matrix with H(i,j)=1 if G(:,i)=D(:,j), else H(i,j)=0.
% In the case where $G$ is the matrix with columns the values of \beta
% and D=C is the matrix with columns the vertices of K, then H(i,j)=1
% means that \beta(i)=C(:,j) or equivalently that \beta(i)=P_j, where
% P_j is the j-vertex of the convex polytope K.
% H(i,j)=0 for any j, means that $\beta(i) is not a vertex of K.

function[H]=secondpartbeta(G,D)
[N,M]=size(G)
[n,m]=size(D)
for i=1:M
    for j=1:m
        if G(:,i)==D(:,j)
            H(i,j)=1
        else
            H(i,j)=0
        end
    end
end
end

%%%%%%%%%%%%%%%%%%%%%%%%%%%%%%%%%%%%%%%%%%%%%%%%%%%%%%%%%%%%
% In the next function H is the output of the previous function (function[H])
% and z is the sum of the columns of H.  z(i)=0
% implies   H(i,j)=0 for any j and \beta(i) is not a vertex of K.
% Below, for any i with z(i)=0 we expand \beta(i) as a convex combination
% of the columns of C,  and we put these coefficients in the corresponding
% places of $H$. The new matrix H, is the matrix H2.

function[H2]=Hsecond(G,C,H)
[a,b]=size(C)
f=zeros(b,1)
[a1,b1]=size(H)
z=H(:,1)
for i=2:b1
    z=z+H(:,i)
end
w=ones(1,b)
H2=H
e=[1]
for i=1:a1
    if z(i)==0
        Aeq=[C;w]
        beq=[G(:,i);e]
        lb=zeros(b,1)
        x=linprog(f,[],[],Aeq,beq,lb)
        H2(i,:)=x'
    end
end

%%%%%%%%%%%%%%%%%%%%%%%%%%%%%%%%%%%%%%%%%%%%%%%%%%%%%%%%%%%
%The next function determines the new vectors y_i.

function[Y]=LastVectors(X,C,z,H2)
[N,M]=size(X);
[a,d]=size(C);
n=d-N ;
Y=zeros(n,M);
for k=1:n
    for j=1:M
        Y(k,j)=H2(j,a+k)*z(j);
    end
end
end

%%%%%%%%%%%%%%%%%%%%%%%%%%%%%%%%%%%%%%%%%%%%%%%%%%%%%%%%%%
% Determines a matrix U with rows the vectors of the
% positive basis of the minimal lattice-subspace Z.

function[U]=basismatrix(X,Y,C)
V=[X;Y]
[a,b]=size(C)
Z=zeros(b-a,b)
D=[C;Z]
for i=a+1:b
    D(i,i)=1
end

for i=a+1:b
    D(:,i)=D(:,i)/2
end
U=D\V
end

%%%%%%%%%%%%%%%%%%%%%%%%%%%%%%%%%%%%%%%%%%%%%%%%%%%
function[F,V,R] = addzeros(A1,F,V,y)
m1=length(y)
r=0;
for i=1:m1
    if y(i)>0
        j=i-r
        W(:,i)=V(:,j)
    else
        W(:,i)=A1(:,i)
        r=r+1
    end
end
F=F
V=W
R=A1-F*V
end

\end{verbatim}

\end{document}